\newcommand{\norm}[1]{\mbox{$\|#1\|$}}
\newcommand{\x}{\times}
\newcommand{\cs}{\mbox{$C^{*}$-algebra}}
\newcommand{\css}{\mbox{$C^{*}$-algebras}}
\newcommand{\C}{\textbf{C}}%{\mathbb{C}}
\newcommand{\R}{\textbf{R}}%{\mathbb{R}}
\newcommand{\T}{\mathcal{T}}
\newcommand{\ov}[1]{\mbox{$\overline{#1}$}}
\newcommand{\al}{\mbox{$\alpha$}}
\newcommand{\eps}{\mbox{$\epsilon$}}
\newcommand{\ga}{\mbox{$\gamma$}}
\newcommand{\la}{\mbox{$\lambda$}}
\newcommand{\bc}{\begin{center}}
\newcommand{\ec}{\end{center}}
\newcommand{\be}{\begin{enumerate}}
\newcommand{\ee}{\end{enumerate}}
\newcommand{\beq}{\begin{equation}}
\newcommand{\eeq}{\end{equation}}
\newcommand{\beqn}{\begin{eqnarray}}
\newcommand{\eeqn}{\end{eqnarray}}
\newcommand{\beqns}{\begin{eqnarray*}}
\newcommand{\eeqns}{\end{eqnarray*}}
\newcommand{\bi}{\begin{itemize}}
\newcommand{\ei}{\end{itemize}}
\newcommand{\bd}{\begin{description}}
\newcommand{\ed}{\end{description}}
\newcommand{\lan}{\mbox{$\langle$}}
\newcommand{\ran}{\mbox{$\rangle$}}
\newtheorem{theorem}{Theorem}[section]
\newtheorem{definition}[theorem]{Definition}
\newtheorem{proposition}{Proposition}
\newtheorem{cor}[theorem]{Corollary}
\theoremstyle{remark}
\begin{document}
\title{The stabilization theorem for proper groupoids}
\author{Alan L. T. Paterson}
\address{University of Colorado, Department of Mathematics, Boulder, Colorado 80309-0395}
\email{apat1erson@gmail.com}
\keywords{groupoids, stabilization, $G$-Hilbert modules, $G$-Hilbert bundles}
%\subjclass{Primary: 19K35, 22A22, 46L80, 58B34}
\date{August, 2009}
\begin{abstract}
The stabilization theorem for $A$-Hilbert modules was established by G. G. Kasparov.  The equivariant version, in which a locally compact group $H$ acts properly on a locally compact space $Y$, was proved by N. C. Phillips.  This equivariant theorem involves the Hilbert $(H,C_{0}(Y))$-module $C_{0}(Y,L^{2}(H)^{\infty})$.  It can naturally be interpreted in terms of a stabilization theorem for proper groupoids, and the paper establishes this theorem within the general proper groupoid context.  The theorem has applications in equivariant KK-theory and groupoid index theory.
\end{abstract}

\maketitle

\section{Introduction}

The Kasparov stabilization theorem (\cite{Kasp0}) asserts that for a C*-algebra $A$, the standard Hilbert module $A^{\infty}$ ``absorbs'' every other (countably generated) Hilbert $A$-module $P$ in the sense that
\[                  P\oplus A^{\infty}\cong A^{\infty}.                  \]
The theorem is of central importance for the development of KK-theory, and can be regarded as an extension of Swan's theorem for vector bundles.  Accounts of the theorem are given in the books by Blackadar and Wegge-Olsen (\cite{Blackadar,Wegge-Olsen}).  In \cite[Part 1, \S 2, Theorem 1]{Kaspconsp}, Kasparov obtained a stabilization theorem involving a group action: if $H$ is a locally compact group acting on $A$ and $P$ is a Hilbert $(H-A)$-module that is countably generated as a Hilbert $A$-module, then 
\[   P\oplus L^{2}(H,A)^{\infty}\cong L^{2}(H,A)^{\infty}                              \]
in the sense that there exists an $H$-continuous isomorphism from $P\oplus L^{2}(H,A)^{\infty}$ onto $L^{2}(H,A)^{\infty}$.  The isomorphism, however, need not be equivariant.  An elegant, self-contained account of all of this is contained in the paper \cite{MingoPhillips} of J. A. Mingo and 
W. J. Phillips.

For an equivariant stabilization theorem, one needs a properness condition, and N. C. Phillips has obtained such a theorem in the case of group actions 
(\cite[Theorem 2.9]{Phillips}).  Here, a locally compact group $H$ is assumed to act properly on a locally compact Hausdorff space $Y$. This action gives in the obvious way an action of $H$ on the $\cs$ $C_{0}(Y)$.   A Hilbert $(H,C_{0}(Y)$-module is defined to be a Hilbert $C_{0}(Y)$-module with a compatible action of $H$ which is strong operator continuous - for the precise definition, see, for example, \cite[Definition 1]{Kasp0} or 
\cite[Definition 2.1]{MingoPhillips}.   The theorem then asserts that for any Hilbert $(H,C_{0}(Y))$-module $P$, there is an equivariant isomorphism of Hilbert $(H,C_{0}(Y))$-modules:
\[   P\oplus (C_{0}(Y)\otimes L^{2}(H)^{\infty})\cong C_{0}(Y)\otimes L^{2}(H)^{\infty}.                  \]
Phillips uses this stabilization theorem in his proof of the generalized Green-Rosenberg theorem (that equivariant K-theory (in terms of $H$-Hilbert bundles over $Y$) is the same as the K-theory of the transformation groupoid $\cs$).  The starting point for the present paper is the observation (below) that Phillips's stabilization theorem (and the generalized Green-Rosenberg theorem) can be expressed very naturally in terms of locally compact proper groupoids.   (Accounts of the theory of locally compact groupoids are given in \cite{rg,Patersonbook}.)  
Groupoid versions of these theorems are, of course, required for the development of groupoid equivariant KK-theory, as well as for 
index theory in noncommutative geometry (\cite{Connesbook}), in particular, to orbifold theory.  (In connection with the latter, the properness condition is automatically satisfied since the structure of an orbifold with underlying space $X$ is completely described by the Morita equivalence class of a {\em proper}, effective, \'{e}tale Lie groupoid with orbit space homeomorphic to $X$ (\cite[pp.19-23]{orbifoldstringy}).)  The groupoid stabilization theorem is also necessary for extending Higson's K-theory proof of the index theorem (\cite{Higson}) to the equivariant case.  

In this paper, we will prove the stabilization theorem for proper groupoids; the generalized Green-Rosenberg theorem will be discussed elsewhere.  The proof of this stabilization theorem follows similar lines to that of Phillips's stabilization theorem, but also requires groupoid versions of results of \cite{MingoPhillips}.   The main additional technical issues to be dealt with arise from the fact that, unlike the Hilbert bundles of \cite{Phillips}, the Hilbert bundles involved in this paper are not usually locally trivial.  Indeed, the $G$-Hilbert module $P_{G}$ for a proper groupoid $G$, whose Hilbert module $P_{G}^{\infty}$ of infinite sequences stabilizes (as we will see) all the other $G$-Hilbert modules, is associated with a $G$-Hilbert bundle that is not usually locally trivial.  

We now translate the Phillips stabilization theorem into groupoid terms.  We are given a locally compact group $H$ acting properly on the left on $Y$.   One forms the transformation groupoid $G=H\x Y$: so multiplication is given by composition - $(h',hy)(h,y)=(h'h,y)$ - and inversion by 
$(h,y)^{-1}=(h^{-1},hy)$.  The unit space of $H\x Y$ can be identified with $Y$, and the properness condition translates into the requirement that the groupoid be {\em proper}: the map $g\to (r(g),s(g))$ (i.e. $(h,y)\to (hy,y)$) is proper (inverse image of compact is compact).  The next objective is to interpret in groupoid terms the 
$C_{0}(Y)\otimes L^{2}(H)^{\infty}$ occurring in the Phillips stabilization theorem.  A dense pre-Hilbert $(G,C_{0}(Y))$-module of 
$C_{0}(Y)\otimes L^{2}(H)=C_{0}(Y,L^{2}(H))$ is $C_{c}(H\x Y)=C_{c}(G)$ - so for a general proper groupoid $G$, we should replace $C_{0}(Y)\otimes L^{2}(H)$ by the completion $P_{G}$ of the pre-Hilbert module $C_{c}(G)$. The stabilization theorem for proper groupoids is then:
\[                   P\oplus P_{G}^{\infty}\cong P_{G}^{\infty}                  \]
where $P$ is (in the appropriate sense) a $G$-Hilbert module.

All groupoids in the paper are assumed to be locally compact, Hausdorff, proper and second countable, and all Hilbert spaces and Hilbert modules second countable.

For lack of a convenient reference, we state the following elementary partition of unity result which is proved as in, for example, \cite[Theorem 1.3]{Helgason}.  {\em Let $X$ be a second countable locally compact Hausdorff space, $C$ a compact subset of $X$ and $\{V_{1}, \ldots ,V_{n}\}$ a cover of $C$ by relatively compact, open subsets of $X$.  Then there exist $f_{i}\in C_{c}(V_{i})\subset C_{c}(X)$ with $0\leq f_{i}\leq 1$, $\sum_{i=1}^{n}f_{i}(y)\leq 1$ for all $y\in Y$, $\sum_{i=1}^{n}f_{i}(y)=1$ for all $y\in C$.}

\section{Groupoid Hilbert bundles}

We start by discussing the class of Hilbert bundles that we will need for $G$-actions.  The correspondence between Hilbert bundles over $Y$ and Hilbert $C_{0}(Y)$-modules seems to be well known, but for lack of a reference we sketch the details that we will need.  (Note that a Hilbert $C_{0}(Y)$-module $P$ can be regarded as a left $C_{0}(Y)$-module - $fp$ is the same as $pf$ for $p\in P, f\in C_{0}(Y)$.)  In the transformation groupoid case developed by Phillips, one uses locally trivial bundles with fiber $L$ and structure group $U(L)$ with the strong operator topology.  However, as noted above, the bundle associated with $C_{c}(G)$, required for the groupoid stabilization theorem, is not always locally trivial (though in the transformation groupoid case, it is trivial ($=Y\x L^{2}(H)$)), and we extend the class of bundles to be considered as follows.    Our approach, based on the work of Fell and Hoffman, is modelled on the account of the Dauns-Hoffman theorem in \cite{DupreG} with bundles of Banach spaces and $\css$ replaced by Hilbert bundles over $Y$ and Hilbert $C_{0}(Y)$-modules.  For the results of \cite[Chapter 2]{DupreG}, the Banach modules are modules over $C_{b}(X)$ where $X$ is completely regular.  In our case, we wish to obtain similar results for Hilbert modules over $C_{0}(Y)$.   (The corresponding modifications needed for $C_{0}(Y)$-algebras are given in 
\cite{Patdesc}.  See also \cite[C.2]{Williams}.)  Since the Hilbert bundles that we will need are usually not locally trivial, it is natural to define such a bundle in terms of a space of sections deemed to be continuous and vanishing at infinity (cf. \cite[Ch. 10]{D2}).   This can be done.  However, for our purposes, it is more convenient to use a topological approach which is in some respects akin to the classical definition of vector bundles.  In the following definition of {\em Hilbert bundle}, we are given a topology on the total space and the set of continuous sections that vanish at infinity has to satisfy certain properties. 

\begin{definition}  \label{def:Hilbertbundle} 
Let $\{H_{y}\}_{y\in Y}$ be a family of Hilbert spaces, $E$ a second countable, topological space which is the disjoint union of the $H_{y}$'s, and $\pi:E\to Y$ be the projection map.  Let $C_{0}(Y,E)$ be the set of continuous sections $F$ of $E$ such $\lim_{y\to \infty} \norm{F(y)}=0$.   Then $E$ is called a {\em Hilbert bundle over $Y$} if the following properties hold: 
\be
\item [(i)] the addition map $E\oplus_{Y} E\to E$ and the scalar multiplication map $(Y\x \C)\oplus_{Y} E\to E$ are continuous;
\item [(ii)] For each $F\in C_{0}(Y,E)$, the map $y\to \norm{F(y)}$ is continuous; 
\item [(iii)] for each $y$, $\{F(y):F\in C_{0}(Y,E)\}=H_{y}$. 
\item [(iv)] The topology on $E$ is determined by $C_{0}(Y,E)$ in the sense that a base for it is given by the sets of the form $U_{F,\eps}$, where $U$ is an open subset of $Y$ and 
\begin{equation}   \label{eq:UFeps}
U_{F,\eps}=\{h_{y}: y\in U, h_{y}\in H_{y}, \norm{h_{y} - F(y)}<\eps\}.     
\end{equation}
\ee
\end{definition} 

Here are some comments on the preceding definition.  From (i) and (iii), $C_{0}(Y,E)$ is a vector space.  It follows from (iv) and (iii) that 
$\pi$ is open and continuous, and each $H_{y}$ has its Hilbert space norm topology in the relative topology of $E$.  Using (ii), (iii) and (iv), the norm function $\norm{.}:E\to \R$ is continuous.  By a simple triangular inequality argument - use the continuity of $y\to \norm{F(y)-F'(y)}$ for $F, F'\in C_{0}(Y,E)$ - if $\xi\in H_{y_{0}}$ and $F\in C_{0}(Y,E)$ is fixed such that $F(y_{0})=\xi$, then the family of sets $U(F,\eps)$ with 
$y_{0}\in U$, $\eps>0$, is a base of neighborhoods for $\xi$ in $E$.  By \cite[p.57]{Kelley}, there is a countable base for the topology of $E$ consisting of sets of the form $U(F,\eps)$.  We note that $E$ is Hausdorff though we will not use this fact.  We also note that in (iv), we get the same topology if the functions $F$ are restricted to lie in a subspace of $C_{0}(Y,E)$ which is dense in the uniform norm topology (below).

\begin{proposition}   \label{prop:secondctble}
Let $E$ be a Hilbert bundle over $Y$.  Then $C_{0}(Y,E)$ is a separable $C_{0}(Y)$-Hilbert module in the uniform norm topology: 
$\norm{F}=\sup_{y\in Y}\norm{F(y)}$.
\end{proposition}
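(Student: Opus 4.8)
The plan is to install the right $C_{0}(Y)$-module structure and the $C_{0}(Y)$-valued inner product fibrewise, where every identity reduces to elementary Hilbert-space facts, and then to prove the two substantive points: that $C_{0}(Y,E)$ is complete, and that it is separable, in the norm $\norm{F}=\sup_{y\in Y}\norm{F(y)}$. For the right action I would set $(Ff)(y)=f(y)F(y)$; that this lands back in $C_{0}(Y,E)$ follows from axiom~(i) of Definition~\ref{def:Hilbertbundle} (compose the continuous map $y\mapsto(f(y),F(y))$ into $(Y\x\C)\oplus_{Y}E$ with the continuous scalar multiplication) together with $\norm{f(y)F(y)}=|f(y)|\,\norm{F(y)}$ and the boundedness of $f$; the module axioms then hold fibrewise. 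For the inner product I would set $\lan F,F'\ran(y)=\lan F(y),F'(y)\ran_{H_{y}}$; continuity of $y\mapsto\lan F,F'\ran(y)$ follows from the polarization identity applied to axiom~(ii) for the sections $F+i^{k}F'\in C_{0}(Y,E)$, vanishing at infinity comes from Cauchy--Schwarz, and sesquilinearity, the identity $\lan F,F'f\ran=\lan F,F'\ran f$, positivity ($\lan F,F\ran(y)=\norm{F(y)}^{2}$) and definiteness are fibrewise. In particular the induced Hilbert-module norm of $F$ is $\sup_{y}\norm{F(y)}$, as claimed.

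For completeness, given a Cauchy sequence $(F_{n})$ in the uniform norm, each $(F_{n}(y))$ is Cauchy in $H_{y}$ with limit $F(y)\in H_{y}$, and a routine estimate upgrades this to $F_{n}\to F$ uniformly. The step that genuinely uses axiom~(iv) is that the limit $F$ is continuous: fix $y_{0}$; by axiom~(iii) pick $G\in C_{0}(Y,E)$ with $G(y_{0})=F(y_{0})$; by the remarks following Definition~\ref{def:Hilbertbundle} the sets $U_{G,\eps}$ with $y_{0}\in U$ form a base of neighbourhoods of $F(y_{0})$ in $E$, so given such $U$ and $\eps>0$ it suffices to find open $V$ with $y_{0}\in V\subseteq U$ and $F(V)\subseteq U_{G,\eps}$. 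Choosing $n$ with $\norm{F_{n}-F}<\eps/2$, the function $y\mapsto\norm{F_{n}(y)-G(y)}$ is continuous by axiom~(ii) and vanishes at $y_{0}$, hence is $<\eps/2$ on some such $V$, whence $\norm{F(y)-G(y)}<\eps$ on $V$. Finally $F$ vanishes at infinity as a uniform limit of sections that do, so $F\in C_{0}(Y,E)$.

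For separability I would invoke the countable base of $E$ given by \cite[p.57]{Kelley} (already used after Definition~\ref{def:Hilbertbundle}): writing its members as $U_{F_{k},\eps}$ with $\{F_{k}\}\subseteq C_{0}(Y,E)$ countable, one deduces from axiom~(iii) that $\{F_{k}(y):k\}$ is dense in $H_{y}$ for every $y$. Let $D$ be a countable dense subset of the separable algebra $C_{0}(Y)$. I claim the countable set of finite sums $\sum_{i}g_{i}F_{k_{i}}$ ($g_{i}\in D$) is dense in $C_{0}(Y,E)$. Given $G\in C_{0}(Y,E)$ and $\eps>0$, first replace $G$ by $\psi G$ with $\psi\in C_{c}(Y)$, $0\le\psi\le1$, $\psi\equiv1$ on the compact set $\{y:\norm{G(y)}\ge\eps\}$, reducing to $G$ with compact support $K$; for each $y\in K$ choose $k(y)$ with $\norm{G(y)-F_{k(y)}(y)}<\eps$ and, by continuity (axiom~(ii)), a relatively compact open $W_{y}\ni y$ on which $\norm{G-F_{k(y)}}<\eps$; take a finite subcover $W_{y_{1}},\dots,W_{y_{n}}$ of $K$ with subordinate partition of unity $\phi_{i}\in C_{c}(W_{y_{i}})$, $\sum_{i}\phi_{i}\le1$, $\sum_{i}\phi_{i}\equiv1$ on $K$ (the partition-of-unity statement in the Introduction). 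Then $\sum_{i}\phi_{i}F_{k(y_{i})}$ is uniformly within $\eps$ of $G$ --- on $K$ because $\sum_{i}\phi_{i}=1$ there and every contributing term is within $\eps$, off $K$ because $G=0$ there and $\norm{F_{k(y_{i})}(y)}<\eps$ wherever $\phi_{i}(y)\ne0$ --- and replacing each $\phi_{i}$ by a uniformly close $g_{i}\in D$ finishes the approximation.

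The main obstacle is the continuity of the limit section in the completeness step: that is the one place where the topology axiom~(iv) is used in an essential way, and it has to be combined with (ii) and (iii) as above. The separability argument is longer but is routine once the countable base of \cite{Kelley} is available and the fibrewise density of the $F_{k}$ has been observed; the module and inner-product verifications are entirely fibrewise.
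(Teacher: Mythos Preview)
Your proof is correct and follows essentially the same approach as the paper's: fibrewise module and inner-product structure via polarization and axiom~(ii), completeness via axiom~(iii) and the neighbourhood base from~(iv), and separability via the countable base of sets $U_{F,\eps}$ combined with a partition-of-unity approximation over a compact set. The only cosmetic difference is that the paper uses the base property directly (finding a base element $V_{y}(F_{y},\eps_{y})\subset U_{y}(F',\eps)$, which immediately yields $\norm{F'-F_{y}}<\eps$ on all of $V_{y}$), whereas you first extract fibrewise density of the $F_{k}(y)$ and then invoke continuity of $\norm{G-F_{k(y)}}$ to get the neighbourhood; both routes amount to the same thing.
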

\begin{proof}
To show that $C_{0}(Y,E)$ is a Banach space, one modifies the proof for the corresponding elementary result on uniform convergence of functions.  Let $\{F_{n}\}$ be a Cauchy sequence in $C_{0}(Y,E)$.  Then $F_{n}\to F$ pointwise for some section $F$ of $E$.  We now show that $F\in C_{0}(Y,E)$.  It is obvious that $\norm{F(y)}\to 0$ as $y\to \infty$.  It remains to show that $F$ is continuous.  Let $y_{k}\to y_{0}$ in $Y$.  We have to show that 
$F(y_{k})\to F(y_{0})$.  Let $F'\in C_{0}(Y,E)$ be such that $F'(y_{0})=F(y_{0})$.  Let $U$ be an open neighborhood of $y_{0}$ and $\eps>0$.  
One shows that eventually, $F(y_{k})\in U(F',\eps)$ and the continuity of $F$ follows by the preceding comments on the definition.  For $F_{1}, F_{2}\in C_{0}(Y,E)$, define $\lan F_{1},F_{2}\ran:Y\to \C$ in the obvious way: $\lan F_{1},F_{2}\ran (y)=\lan F_{1}(y),F_{2}(y)\ran$.  By the polarization identity and (ii) of the definition, $\lan F_{1},F_{2}\ran\in C_{0}(Y)$.  It is easy to check that $C_{0}(Y,E)$ is a Hilbert $C_{0}(Y)$-module with inner product $\lan .,.\ran$ and module action given by: $Ff(y)=f(y)F(y)$.  

We now prove that $C_{0}(Y,E)$ is separable.  Let $\mathcal{A}$ be a countable base for $E$ whose elements are of the form $U(F,\eta)$.  It suffices to show that for a compact subset $C$ of $Y$, the space of sections $A\subset C_{0}(Y,E)$ with support in $C$ is separable.   Let $F'\in A$ and $\eps>0$.
For each $y\in C$, let $U_{y}$ be a relatively compact, open neighborhood of $y$ in $Y$.  Then $F'(y)\in U_{y}(F',\eps)$, and there exists a 
$V_{y}(F_{y},\eps_{y})\in \mathcal{A}$ such that $F'(y)\in V_{y}(F_{y},\eps_{y})\subset U_{y}(F',\eps)$.  In particular, $y\in V_{y}\subset U_{y}$ and
$\norm{F'(y') - F_{y}(y')}<\eps$ for all $y'\in V_{y}$.   Since $C$ is compact, there exists a finite cover $\{V_{y_{1}}, \ldots V_{y_{n}}\}$ of $C$.  Let $\{f_{i}\}$ ($1\leq i\leq n$) be a partition of unity for $C$ subordinate to the $\{V_{y_{i}}\}$, and let $F''=\sum_{i=1}^{n} f_{i}F_{y_{i}}$.  Then $\norm{F'(y) - F''(y)}<\eps$ for all $y\in Y$.  The span of such functions $F''$ in $C_{0}(Y,E)$ is separable, and the separability of $C_{0}(Y,E)$ then follows.
\end{proof}

As a simple example of a Hilbert bundle, let $Y=(0,2)$, $F$ be the trivial Hilbert bundle $Y\x \C^{2}$ and $\{e_{1}, e_{2}\}$
the standard orthonormal basis for $\C^{2}$.  Then $C_{0}(Y,F)=C_{0}((0,2))\x C_{0}((0,2))$ in the obvious way.  Let 
$E$ be the subbundle $[(0,1]\x \C e_{1}]\cup [(1,2)\x \C^{2}]$ of $F$ with the relative topology.  Then $E$ is a Hilbert subbundle of $F$ though it is neither locally constant nor locally compact.  (Note that $C_{0}(Y,E)$ can be identified with 
$C_{0}((0,2))\x \{f\in C_{0}((0,2)): f(y)=0 \mbox{ for $0<y\leq 1$}\}$.)

A morphism between two Hilbert bundles $E, F$ over $Y$ is (cf. \cite[Definition 1.5]{Phillips}) a continuous bundle map $\Phi:E\to F$ whose restriction $\Phi_{y}:E_{y}\to F_{y}$ for each $y\in Y$ is a bounded linear map and $\sup_{y\in Y}\norm{\Phi_{y}}=\norm{\Phi}<\infty$, and such that the adjoint map
$\Phi^{*}:F\to E$, where $\Phi^{*}(\xi_{y})=(\Phi_{y})^{*}(\xi_{y})$ for $\xi_{y}\in F_{y}$ is also continuous.  It is obvious that any such morphism $\Phi$ determines an adjointable Hilbert module map $\tilde{\Phi}:C_{0}(Y,E)\to C_{0}(Y,F)$ by setting $\tilde{\Phi}(F)(y)=\Phi_{y}(F(y))$.  It is also obvious that with these morphisms, the class of Hilbert bundles over $Y$ is a category.

We have seen that every $C_{0}(Y,E)$ is a second countable $C_{0}(Y)$-Hilbert module.  We will show that every second countable $C_{0}(Y)$-Hilbert module $P$ is of this form.  We recall first that a morphism between two Hilbert $C_{0}(Y)$-modules $P, Q$ is an adjointable map $T:P\to Q$.  This gives the category of Hilbert $C_{0}(Y)$-modules.   Two Hilbert $C_{0}(Y)$-modules $P, Q$ are said to be {\em equivalent} - written $P\cong Q$ - if there exists a unitary morphism $U:P\to Q$.  Next, a result of 
Kasparov (\cite[Theorem 1]{Kasp0}, \cite[Lemma 15.2.9]{Wegge-Olsen}) gives that in any Hilbert $A$-module $P$ and for any $p\in P$,
\begin{equation}   \label{eq:ppf} 
p=\lim_{\eps\to 0^{+}}p\lan p,p\ran [\lan p,p\ran + \eps]^{-1}.
\end{equation}
It follows by Cohen's factorization theorem and (\ref{eq:ppf}) that $P=\{fp: f\in C_{0}(Y), p\in P\}$.  
In the stabilization theorem of Kasparov, the Hilbert $A$-modules are assumed to be countably generated. It is obvious that in our situation
($P$ second countable) $P$ is automatically countably generated.  

Let $P$ be a $C_{0}(Y)$-Hilbert module.    We construct an associated Hilbert bundle $E$ in the familiar way (e.g. \cite{DupreG}).  For $y\in Y$, let $I_{y}=\{f\in C_{0}(Y): f(y)=0\}$, a closed ideal in $C_{0}(Y)$.  By Cohen's factorization theorem, $I_{y}P$ is closed in $P$.  Let $P/(I_{y}P)=P_{y}$.  We claim that the norm on $P_{y}$ is a Hilbert space norm, with inner product given by $\lan p+I_{y}P,q+I_{y}P\ran=\lan p,q\ran (y)$.  This inner product is well-defined.  To see that it is non-degenerate, suppose that $\lan p,p\ran(y)=0$.  Then 
$\lan p,p\ran\in I_{y}$ and
by (\ref{eq:ppf}), $p\in \ov{(I_{y}P)}=I_{y}P$, and non-degeneracy follows.  Let $E=\cup_{y\in Y}P_{y}$.  If we wish to emphasize the connection of $E$ with $P$, we write $E_{P}$ in place of $E$.  (If $Q$ is just a pre-Hilbert $C_{0}(Y)$-submodule, we define $E_{Q}$ to be $E_{\ov{Q}}$.) 
For each $p\in P$, let $\hat{p}(y)=p + I_{p}P\in H_{y}$.  We sometimes write $p_{y}$ in place of $\hat{p}(y)$.
For each open subset $U$ of $Y$ and each $\eps>0$, define $U_{p,\eps}=U_{\hat{p},\eps}$, the latter being defined as in (\ref{eq:UFeps}).

We now show that the functor $E\to C_{0}(Y,E)$ is an equivalence for the categories of Hilbert bundles over $Y$ and of 
Hilbert $C_{0}(Y)$-modules.  

\begin{proposition}   \label{prop:Etop}
Let $P$ be a Hilbert $C_{0}(Y)$-module.  Then the family of $U_{p,\eps}$'s ($p\in P$) is a base for a second countable topology $\T_{P}$ on $E$ which makes $E$ into a Hilbert bundle over $Y$.  Further, the map $p\to \hat{p}$ is a Hilbert $C_{0}(Y)$-module unitary from $P$ onto $C_{0}(Y,E)$, and the map $P\to E$ is an equivalence between the category of Hilbert $C_{0}(Y)$-modules $P$ and the category of Hilbert bundles $E$ over $Y$.
\end{proposition}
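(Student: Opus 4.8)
The plan is to proceed in three stages: (1) the sets $U_{p,\eps}$ form a base for a second countable topology $\T_P$ on $E$ making it a Hilbert bundle; (2) $p\mapsto\hat p$ is a Hilbert $C_0(Y)$-module unitary from $P$ onto $C_0(Y,E)$; and (3) this yields the asserted equivalence of categories. The elementary fact used throughout is that for $p,q\in P$ the function $y\mapsto\norm{p_y-q_y}=\lan p-q,p-q\ran(y)^{1/2}$ lies in $C_0(Y)$, hence is continuous on $Y$ before any topology on $E$ is available; likewise $y\mapsto\norm{p_y}$ is continuous and vanishes at infinity.

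For the base property one argues as in classical bundle theory: given $h_y\in U_{p,\eps}\cap V_{q,\de}$, choose (by surjectivity of $P\to P_y$) an $r\in P$ with $r_y=h_y$; then a sufficiently small $W_{r,\ga}$ with $W\subseteq U\cap V$ lies inside the intersection, since $\norm{k_{y'}-p_{y'}}\leq\ga+\norm{r_{y'}-p_{y'}}$ and $y'\mapsto\norm{r_{y'}-p_{y'}}$ is continuous with value $\norm{h_y-p_y}<\eps$ at $y$ (and similarly for $q$). Second countability follows by the familiar argument: with $\{p_n\}$ dense in $P$ and $\mathcal B$ a countable base of $Y$, the sets $U_{p_n,q}$ ($U\in\mathcal B$, $q\in\Q$, $q>0$) form a base, approximating a chosen $r$ (with $r_y=h_y$) by some $p_n$ and shrinking. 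For the bundle axioms: each $\hat p$ is a section of $E$ vanishing at infinity and is continuous, because $\hat p^{-1}(U_{q,\eps})=\{y'\in U:\norm{p_{y'}-q_{y'}}<\eps\}$ is open; so $\hat p\in C_0(Y,E)$ and axiom (iv) of Definition \ref{def:Hilbertbundle} holds by construction. The norm map $E\to\R$ is continuous by a triangle-inequality estimate on the basic sets, which gives (ii); (iii) holds because $\hat p(y)=p_y$ already exhausts $P_y=H_y$; and continuity of addition and scalar multiplication on $E\oplus_Y E$ and $(Y\x\C)\oplus_Y E$ follows in the usual way, realising a sum (resp. scalar multiple) of fibre vectors as a fibre of $q+r$ (resp. $\la r$) and estimating on basic neighbourhoods.

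For stage (2), the formula $\lan p+I_yP,q+I_yP\ran=\lan p,q\ran(y)$ gives $\lan\hat p,\hat q\ran=\lan p,q\ran$ in $C_0(Y)$, so $p\mapsto\hat p$ is isometric, and it is $C_0(Y)$-linear since $pf-f(y)p\in I_yP$ (writing $p=p_0g$ with $p_0\in P$, $g\in C_0(Y)$, one has $gf-f(y)g\in I_y$ and hence $pf-f(y)p=p_0(gf-f(y)g)\in I_yP$). The one substantial point is surjectivity onto $C_0(Y,E)$: being the isometric image of the Banach space $P$, the range is closed in $C_0(Y,E)$, so it is enough to approximate a given $F\in C_0(Y,E)$ uniformly. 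Fix $\eps>0$; choose a compact $C\subseteq Y$ with $\norm{F(y)}<\eps$ for $y\notin C$; for each $y\in C$ pick $p^{(y)}\in P$ with $\widehat{p^{(y)}}(y)=F(y)$ and a relatively compact open $U_y\ni y$ with $\norm{\widehat{p^{(y)}}(y')-F(y')}<\eps$ on $U_y$; extract a finite subcover $U_{y_1},\dots,U_{y_n}$ of $C$; take the partition of unity $\{f_i\}$ of the Introduction subordinate to it; and put $p=\sum_i f_i p^{(y_i)}\in P$. Using $-F(y)=-(\sum_i f_i(y))F(y)-(1-\sum_i f_i(y))F(y)$ together with $\sum f_i\leq1$ everywhere and $\sum f_i=1$ on $C$, one checks $\norm{\hat p(y)-F(y)}<\eps$ for every $y$; hence $F$ lies in the range.

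For the equivalence it suffices to show the functor $E\mapsto C_0(Y,E)$ (already known to be a functor) is fully faithful and essentially surjective, for then a quasi-inverse is, up to natural isomorphism, $P\mapsto E_P$ by stage (2). Essential surjectivity is stage (2). Faithfulness is immediate from axiom (iii): if $\tilde\Phi=\tilde\Psi$ then $\Phi_y=\Psi_y$ on $\{F(y):F\in C_0(Y,E)\}=H_y$. For fullness, an adjointable $T:C_0(Y,E)\to C_0(Y,F)$ is $C_0(Y)$-linear, hence descends to the fibre quotients once one identifies $C_0(Y,E)/I_yC_0(Y,E)$ with $H_y$ — valid because $G(y)=0$ forces $G\in I_yC_0(Y,E)$ by (\ref{eq:ppf}) applied inside $C_0(Y,E)$ — giving bounded maps $T_y:H_y\to F_y$ with $\sup_y\norm{T_y}\leq\norm T$; the fibrewise map $\Phi$ they assemble satisfies $\tilde\Phi=T$, and continuity of $\Phi$ and of $\Phi^{*}=\widetilde{T^{*}}$ follows from the same norm-continuity estimate used for $\hat p$, now applied to the sections $T(F)$ and $T^{*}(G)$. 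The main obstacle I expect is the surjectivity in stage (2): this is the only step requiring a genuine local-to-global argument, and it is where the topology of $Y$ enters, through the partition-of-unity patching; a secondary technical point is the fibre identification $C_0(Y,E)/I_yC_0(Y,E)\cong H_y$ underpinning the fullness argument.
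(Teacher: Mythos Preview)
Your proof is correct and follows essentially the same route as the paper's: verify the $U_{p,\eps}$ form a base, check the bundle axioms, establish that $p\mapsto\hat p$ is an inner-product-preserving $C_0(Y)$-module map, and obtain surjectivity onto $C_0(Y,E)$ by a partition-of-unity patching (the paper defers this to the argument of its Proposition~\ref{prop:secondctble}, which is exactly your compact-exhaustion plus subordinate partition). One small slip: your estimate in stage~(2) gives $\norm{\hat p(y)-F(y)}<2\eps$ rather than $\eps$, since off $C$ both summands in your decomposition contribute; this is of course harmless.

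Where you genuinely go further than the paper is in stage~(3). The paper only records that a module morphism $T:P\to Q$ induces a bundle morphism $\Phi_T=\{T_y\}$ with $\norm{\Phi_T}=\norm T$, i.e.\ functoriality of $P\mapsto E_P$, and leaves the rest of the equivalence implicit. You instead argue directly that $E\mapsto C_0(Y,E)$ is fully faithful and essentially surjective, supplying the fibre identification $C_0(Y,E)/I_yC_0(Y,E)\cong H_y$ via (\ref{eq:ppf}) to descend an adjointable $T$ to fibre maps, and checking continuity of the assembled $\Phi$ through the continuous sections $TF$. This is a cleaner and more complete justification of the categorical statement than the paper actually writes down, at the cost of a bit more bookkeeping; the paper's terser approach relies on the reader filling in the quasi-inverse direction $E\cong E_{C_0(Y,E)}$.
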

\begin{proof}
Give each $\hat{p}$ the uniform norm as a section of $E$.  The proposition is an easier version of corresponding results for Banach $A$-modules in \cite{DupreG}.  It is easier because, as earlier, by the polarization identity, the maps $y\to \norm{\hat{p}(y)}=\sqrt{\lan p,p\ran(y)}$ are continuous 
(instead of just upper semicontinuous) and vanish at infinity.   Then $\norm{\hat{p}}^{2}=\norm{\lan p,p\ran}=\norm{p}^{2}$, giving $p\to \hat{p}$ an isometry. 
We now check the conditions of Definition~\ref{def:Hilbertbundle} to show that $E$ is a Hilbert bundle over $Y$.  One easily checks that the family of $U_{p,\eps}$'s ($p\in P$) is a base for a topology $\T_{P}$ on $E$, each $\hat{p}$ is continuous and the addition and scalar multiplication maps for $E$ are continuous.  The topology $\T_{P}$ on $E$ is second countable since $P$ is.  This gives (i) of Definition~\ref{def:Hilbertbundle}, while (iii) of that definition is trivial.  The remaining requirements, (ii) and (iv) will follow once we have shown that $\hat{P}=C_{0}(Y,E)$.   As in the proof of Proposition~\ref{prop:secondctble} (cf. \cite[Proposition 2.3]{DupreG}) $\hat{P}$ is dense in $C_{0}(Y,E)$.
Further, $\lan \hat{p},\hat{q}\ran = \lan p,q\ran$ 
giving the map $p\to \hat{p}$ unitary.  Then $\hat{P}=C_{0}(Y,E)$ since the map $p\to \hat{p}$ is isometric and $P$ is complete.  A morphism $T:P\to Q$ of Hilbert $C_{0}(Y)$-modules determines a Hilbert bundle morphism $\Phi=\Phi_{T}:E_{P}\to E_{Q}$ in the natural way:  set $\Phi=\{T_{y}\}$ where $T_{y}$ is defined: $T_{y}p_{y}=(Tp)_{y}$.  Then $\Phi:E_{P}\to E_{Q}$ is a continuous bundle map, and $\norm{\Phi}=\norm{T}$.  
\end{proof}

For a Hilbert bundle $E$ over $Y$, let $G*E=\{(g,\xi): s(g)=\pi(\xi)\}$ with the relative topology inherited from $G\x E$.  Then $E$ is called a 
{\em $G$-Hilbert bundle} if there is a continuous map $(g,\xi)\to g\xi$ from $G *E\to E$ which is algebraically a left groupoid action (by unitaries).   (The unitary condition means that for each fixed $g\in G$, the map $\xi\to g\xi$ is unitary from $H_{s(g)}$ onto $H_{r(g)}$.)  One can also define this notion in terms of pull-back bundles as in \cite{LeGall1,LeGall2}, but the approach adopted here is more elementary, and closer in spirit to the usual definition of a group Hilbert bundle.   A Hilbert $C_{0}(Y)$-module $P$ is called a {\em $G$-Hilbert module} if $E_{P}$ is a $G$-Hilbert bundle. 
The corollary to the following proposition shows that when $G$ is a transformation groupoid $H\x Y$, a $G$-Hilbert module is the same as a Hilbert $(H,C_{0}(Y))$-module in the notation of \cite{Phillips}.  (In \cite[Proposition 1.3]{Phillips}, it is shown that if $E$ is an $H$-Hilbert bundle over $Y$, then $C_{0}(Y,E)$ is a Hilbert $(H,C_{0}(Y))$-module.  The corollary shows that the opposite direction holds as well as long as we use the wider category of Hilbert bundles of the present paper.)  

\begin{proposition}  \label{prop:cont}
A left groupoid action of $G$ on $E$ is continuous if and only if, for each $F\in C_{0}(Y,E)$, the map $g\to gF_{s(g)}$ is continuous from $G\to E$.
\end{proposition}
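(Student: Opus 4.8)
The plan is to prove the nontrivial ``if'' direction, the ``only if'' direction being immediate: if $(g,\xi)\mapsto g\xi$ is continuous on $G*E$, then for $F\in C_{0}(Y,E)$ the map $g\mapsto gF_{s(g)}$ is the composition of $g\mapsto(g,F(s(g)))$, which is continuous since $s$ and $F$ are, with the action.

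So assume $g\mapsto gF_{s(g)}$ is continuous for every $F\in C_{0}(Y,E)$. I would fix $(g_{0},\xi_{0})\in G*E$, set $y_{0}=s(g_{0})=\pi(\xi_{0})$ and $\eta_{0}=g_{0}\xi_{0}\in H_{r(g_{0})}$, and recall from the remarks following Definition~\ref{def:Hilbertbundle} that the sets $U_{F',\eps}$ with $F'\in C_{0}(Y,E)$, $F'(r(g_{0}))=\eta_{0}$, $r(g_{0})\in U$ and $\eps>0$ form a base of neighborhoods of $\eta_{0}$ in $E$. It therefore suffices, given such $F'$, $U$, $\eps$, to find a neighborhood $W$ of $g_{0}$ in $G$ and a neighborhood $V$ of $\xi_{0}$ in $E$ with $g\xi\in U_{F',\eps}$ for all $(g,\xi)\in(W\x V)\cap(G*E)$.

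To produce these, I would first use Definition~\ref{def:Hilbertbundle}(iii) to pick $F\in C_{0}(Y,E)$ with $F(y_{0})=\xi_{0}$. For $(g,\xi)\in G*E$ we have $\pi(\xi)=s(g)$, so $\xi$ and $F(s(g))=F_{s(g)}$ both lie in $H_{s(g)}$, and unitarity of the action yields
\[
\norm{g\xi-F'(r(g))}\ \le\ \norm{g\xi-gF_{s(g)}}+\norm{gF_{s(g)}-F'(r(g))}\ =\ \norm{\xi-F(\pi(\xi))}+\norm{gF_{s(g)}-F'(r(g))}.
\]
Now $\xi\mapsto\xi-F(\pi(\xi))$ is continuous $E\to E$ (by continuity of $\pi$, of $F$, and of the fibrewise subtraction of Definition~\ref{def:Hilbertbundle}(i)) and vanishes at $\xi_{0}$, so $\norm{\xi-F(\pi(\xi))}<\eps/2$ on a neighborhood $V$ of $\xi_{0}$; and $g\mapsto gF_{s(g)}-F'(r(g))$ is continuous $G\to E$ (both terms lie in $H_{r(g)}$, and one uses the hypothesis on $F$, continuity of $r$ and $F'$, and fibrewise subtraction) and vanishes at $g_{0}$ since $g_{0}F_{s(g_{0})}=g_{0}\xi_{0}=\eta_{0}=F'(r(g_{0}))$, so $\norm{gF_{s(g)}-F'(r(g))}<\eps/2$ on a neighborhood $W_{1}$ of $g_{0}$. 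Choosing $W_{2}\ni g_{0}$ with $r(W_{2})\subset U$ and setting $W=W_{1}\cap W_{2}$, I obtain $r(g)\in U$ and $\norm{g\xi-F'(r(g))}<\eps$ for all $(g,\xi)\in(W\x V)\cap(G*E)$, i.e. $g\xi\in U_{F',\eps}$, which establishes continuity of the action at $(g_{0},\xi_{0})$.

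The only step that needs genuine care is the fibre bookkeeping in the triangle inequality: one must observe that each of the two differences is taken within a single fibre ($H_{s(g)}$, respectively $H_{r(g)}$), so that the fibrewise subtraction map of Definition~\ref{def:Hilbertbundle}(i) applies and gives continuity after composing with the norm, and that unitarity of the $G$-action is exactly what converts $\norm{g\xi-gF_{s(g)}}$ into the base-level quantity $\norm{\xi-F(\pi(\xi))}$. The rest is the standard ``approximate by a continuous section and estimate'' argument powered by condition (iv) of the definition.
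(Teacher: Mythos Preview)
Your proof is correct and follows essentially the same approach as the paper's: pick sections $F$ through $\xi_{0}$ and $F'$ through $g_{0}\xi_{0}$, split $\norm{g\xi-F'(r(g))}$ via the triangle inequality, and use unitarity of the action to convert the first summand into $\norm{\xi-F(\pi(\xi))}$. The only cosmetic difference is that the paper argues with convergent sequences (legitimate since $E$ is second countable) while you give a direct neighborhood argument; the underlying estimate is identical.
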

\begin{proof}
If the action is continuous, then trivially, the maps $g\to gF_{s(g)}$ are continuous.  The converse is very similar to \cite[Corollary 1]{Patdesc}, and so we give only a brief sketch of the proof.  Suppose then that for each $F\in C_{0}(Y,E)$, the map $g\to gF_{s(g)}$ is continuous from $G\to E$.  Let $\{g_{n}\}$ be a sequence in $G$ and $\{\xi_{n}\}$ a sequence in $E$ with $\xi_{n}\in E_{s(g_{n})}$ such that $g_{n}\to g$ in $G$ and $\xi_{n}\to \xi$ in $E$.  We have to show that $g_{n}\xi_{n}\to g\xi$ in $E$.  By Definition~\ref{def:Hilbertbundle},(iii), there exist $F\in C_{0}(Y,E)$ such that $g\xi=F_{r(g)}$ and $F'\in C_{0}(Y,E)$ such that $\xi=F'_{s(g)}$.  Then $\norm{\xi_{n} - F'_{s(g_{n})}}\to 0$, so that 
$\norm{g_{n}\xi_{n} - g_{n}F'_{s(g_{n})}}\to 0$ as well. Next, by assumption,  $g_{n}F'_{s(g_{n})}\to gF'_{s(g)}=g\xi=F_{r(g)}$ and so by the continuity of $F$, $\norm{g_{n}F'_{s(g_{n})} - F_{r(g_{n})}}\to 0$.  So $g_{n}\xi_{n}\to g\xi$.
\end{proof}
\begin{cor}   \label{cor:Hmodule}
Let $G$ be a transformation groupoid $H\x Y$.  Then the map $E\to C_{0}(Y,E)$ is an equivalence between the category of $H$-Hilbert bundles over $Y$ and the category of Hilbert $(H,C_{0}(Y))$-modules. 
\end{cor}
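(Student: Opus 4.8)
The plan is to restrict the equivalence of Proposition~\ref{prop:Etop} (Hilbert bundles over $Y$ versus Hilbert $C_{0}(Y)$-modules) to the subcategories cut out by the group action. The first, essentially formal, step is to observe that for $G=H\x Y$ a continuous unitary left $G$-action on a Hilbert bundle $E$ over $Y$ is precisely the datum of an $H$-Hilbert bundle structure on $E$ in the sense of \cite{Phillips}: since $s(h,y)=y$ and $r(h,y)=hy$, the fibred product $G*E$ is just $H\x E$ once we set $h\xi:=(h,\pi(\xi))\xi$, the resulting maps $E_{y}\to E_{hy}$ are unitary, the groupoid identity $(h',hy)(h,y)=(h'h,y)$ becomes $(h'h)\xi=h'(h\xi)$, and continuity of $G*E\to E$ is continuity of $H*E\to E$. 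So the two categories of bundles coincide; it remains to match a $G$-Hilbert bundle $E$ with a Hilbert $(H,C_{0}(Y))$-module structure on $P:=C_{0}(Y,E)$, and to match the morphisms.

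I would set up the correspondence from two mutually inverse constructions. Given a $G$-Hilbert bundle $E$, put $(h\cdot F)(z)=h\bigl(F(h^{-1}z)\bigr)$ for $F\in C_{0}(Y,E)$. Since $\xi\mapsto h\xi$ is a continuous isometry of $E$ (it is $\xi\mapsto((h,\pi(\xi)),\xi)$ followed by $G*E\to E$) and $z\mapsto h^{-1}z$ is a proper homeomorphism of $Y$, one checks $h\cdot F\in C_{0}(Y,E)$; that $V_{h}:F\mapsto h\cdot F$ defines a group of unitaries of $P$ with $V_{h'}V_{h}=V_{h'h}$; and that $\langle h\cdot F_{1},h\cdot F_{2}\rangle=\alpha_{h}(\langle F_{1},F_{2}\rangle)$ and $h\cdot(Ff)=(h\cdot F)\,\alpha_{h}(f)$, where $\alpha_{h}(f)(z)=f(h^{-1}z)$ is the induced $H$-action on $C_{0}(Y)$; these are exactly the axioms of a Hilbert $(H,C_{0}(Y))$-module, apart from the strong continuity addressed below. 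Conversely, given a Hilbert $(H,C_{0}(Y))$-module $P$ with action $\{V_{h}\}$, I would equip $E_{P}$ with the $G$-action $(h,y)\,p_{y}=(V_{h}p)_{hy}$: this is well defined because $p_{y}=0$ forces $p=fq$ with $f\in I_{y}$ (Cohen factorization, as used after~(\ref{eq:ppf})), whence $V_{h}p=(V_{h}q)\,\alpha_{h}(f)$ with $\alpha_{h}(f)\in I_{hy}$, so $(V_{h}p)_{hy}=0$; it is fibrewise unitary since $\langle(V_{h}p)_{hy},(V_{h}q)_{hy}\rangle=\alpha_{h}(\langle p,q\rangle)(hy)=\langle p,q\rangle(y)$; and it is a groupoid action since $V_{h'h}=V_{h'}V_{h}$. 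A one-line computation gives $\widehat{V_{h}p}=h\cdot\hat p$, so the unitary $p\mapsto\hat p$ of Proposition~\ref{prop:Etop} is $H$-equivariant and the two constructions are mutually inverse on objects.

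The substance of the argument is in the two continuity assertions. That the $G$-action just defined on $E_{P}$ is continuous I would obtain from Proposition~\ref{prop:cont}: every section is some $\hat p$, so it suffices to see $(h,y)\mapsto(V_{h}p)_{hy}$ is continuous $G\to E_{P}$, and for a basic neighbourhood $U(\widehat{V_{h_{0}}p},\eps)$ of $(V_{h_{0}}p)_{h_{0}y_{0}}$ this follows from continuity of $(h,y)\mapsto hy$ and the bound $\norm{(V_{h}p)_{hy}-(V_{h_{0}}p)_{hy}}\le\norm{V_{h}p-V_{h_{0}}p}$ together with strong continuity of $\{V_{h}\}$ on $P$. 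The harder assertion -- and the step I expect to be the main obstacle -- is that, starting from a $G$-Hilbert bundle $E$, the action $\{V_{h}\}$ on $C_{0}(Y,E)$ is strongly continuous, i.e. $h\mapsto h\cdot F$ is norm-continuous for each $F$; this is the groupoid analogue of \cite[Proposition 1.3]{Phillips}, and since our bundles need not be locally trivial one cannot simply quote that result. Here I would exploit that $F$ vanishes at infinity: fixing a compact neighbourhood $N_{0}$ of $h_{0}$ and a compact $K$ with $\norm{F(y)}<\eps$ for $y\notin K$, for $z\notin N_{0}K$ (which is compact) we have $h^{-1}z\notin K$ for every $h\in N_{0}$, so $\norm{(h\cdot F)(z)}=\norm{F(h^{-1}z)}<\eps$ and likewise $\norm{(h_{0}\cdot F)(z)}<\eps$; on $N_{0}K$ I would apply the tube lemma to the function $(h,z)\mapsto\norm{(h\cdot F)(z)-(h_{0}\cdot F)(z)}$, which is continuous (as $(h,y)\mapsto hF(y)$ is continuous into $E$ by Proposition~\ref{prop:cont}, and subtraction within fibres and the norm are continuous on $E$) and vanishes identically at $h=h_{0}$, to make it uniformly small near $h_{0}$.

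Finally I would deal with morphisms: by Proposition~\ref{prop:Etop} every adjointable $T:C_{0}(Y,E)\to C_{0}(Y,F)$ is $\tilde\Phi$ for a unique Hilbert bundle morphism $\Phi$, and a direct computation shows that $\Phi$ intertwines the $G$-actions if and only if $T(h\cdot F)=h\cdot T(F)$ for all $h$ -- i.e. $T$ is a morphism of Hilbert $(H,C_{0}(Y))$-modules, its adjoint being again equivariant. Together with essential surjectivity and the $H$-equivariance of $p\mapsto\hat p$ from the second paragraph, this exhibits $E\mapsto C_{0}(Y,E)$ as the asserted equivalence of categories.
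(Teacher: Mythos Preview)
Your proposal is correct and follows essentially the same approach as the paper: identify $H$-Hilbert bundles with $(H\times Y)$-Hilbert bundles, define the $H$-action on sections by $(h\cdot F)(z)=h\bigl(F(h^{-1}z)\bigr)$ and the groupoid action on $E_{P}$ by $(h,y)p_{y}=(hp)_{hy}$, and invoke Proposition~\ref{prop:cont} for continuity of the latter. You are more explicit than the paper on two points it glosses over---the strong continuity of $h\mapsto h\cdot F$ (which the paper simply attributes to \cite[Proposition~1.3]{Phillips}, though as you observe Phillips works with locally trivial bundles) and the treatment of morphisms---but the architecture is the same.
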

\begin{proof}
We recall (\cite{Kasp0,MingoPhillips}) that a Hilbert $C_{0}(Y)$-module $S$ is an $(H,C_{0}(Y))$-module if it is a left $H$-module such that 
$h(Ff)=(hF)(hf)$, the map $h\to hF$ is continuous, and $\lan hF,hF'\ran = h\lan F,F'\ran$ for all $h\in H, F,F'\in S$ and $f\in C_{0}(Y)$.  
(Of course, $(hf)(y)=f(h^{-1}y)$.)  An $H$-Hilbert bundle over $Y$ (cf. \cite[Definition 1.2]{Phillips}) 
is a Hilbert bundle over $Y$ (in the sense of this paper) with a continuous action $(h,\xi)\to h\xi$ from $H\x E$ into $E$ such that for each $y$, the action of $h$ on $E_{y}$ is a unitary onto $E_{hy}$.  (Recalling that $(H\x Y)_{y}=H$ for all $y$, it is obvious that $H$-Hilbert bundles over $Y$ are just the same as the groupoid $(H\x Y)$-Hilbert bundles.)  Suppose, first that $E$ is an $H$-Hilbert bundle.  Then (as in \cite[Proposition 1.3]{Phillips}) the Hilbert $C_{0}(Y)$-module $C_{0}(Y,E)$ is a Hilbert $(H,C_{0}(Y))$-module, where $(fF)(y)=f(y)F(y)$ and $(hF)(y)=h[F(h^{-1}y)]$ ($F\in C_{0}(Y,E)$).  For the converse, let $P$ be a Hilbert $(H,C_{0}(Y))$-module, $E=E_{P}$.  By Proposition~\ref{prop:Etop}, we can canonically identify $P$ with $C_{0}(Y,E)$.   It is obvious that $hI_{y}=I_{hy}$.  We define a groupoid action of $H\x Y$ on $E$ by setting $(h,y)(p+I_{y}P)=hp+I_{hy}$, i.e. $(h,y)p_{y}=(hp)_{hy}$.  We now check that this is indeed a groupoid action (in the sense of this paper).  The algebraic properties are obvious using the formulas for multiplication and inversion in $H\x Y$ given in the introduction.  To prove that $H\x Y$ acts on $E$ by unitaries, 
\[   \lan (h,y)p_{y},(h,y)q_{y}\ran = \lan hp,hq\ran(hy)=\lan p,q\ran (h^{-1}hy)=\lan p_{y},q_{y}\ran.  \]
Last, to prove the continuity of the groupoid action on $E$, we have, by Proposition~\ref{prop:cont}, to show, identifying $\hat{P}$ with 
$C_{0}(Y,E)$, that for each $p\in P$, the map $(h,y)\to (h,y)p_{y}$ is continuous from $H\x Y$ into $E$, i.e. that the map $(h,y)\to (hp)_{hy}$ is continuous.  This is simple to prove using the continuity of the map $h\to hp$ in $P$.
\end{proof}

If $P, Q$ are $G$-Hilbert modules, then a Hilbert $C_{0}(Y)$-module morphism $T:P\to Q$ is called {\em $G$-equivariant} if for all $g\in G$, $T_{r(g)}g=gT_{s(g)}$ on $(E_{P})_{s(g)}$.  Using the fact that the groupoid action is unitary, $T^{*}$ is also $G$-equivariant.  Of course, $P$ and $Q$ are said to be {\em equivalent} ($P\cong Q$) if there exists $G$-equivariant unitary between them.

A pre-Hilbert $C_{0}(Y)$-module $Q$ is called a {\em pre-$G$-Hilbert module} if $\ov{Q}$ is a $G$-Hilbert module, and the action of $G$ on $E=E_{\ov{Q}}$ leaves invariant the $Q_{y}$'s, where $Q_{y}$ is the image of $Q$ in $E_{y}$.  As we will see below, an important example of a pre-$G$-Hilbert module is the case $Q=C_{c}(G)$.  The $C_{0}(Y)$-module action on $C_{c}(G)$ is given by: $(F,f)\to F(f\circ r)$ and the $C_{0}(Y)$-valued inner product on $C_{c}(G)$ by: $\lan F_{1},F_{2}\ran (y)=\lan (F_{1})_{y}, (F_{2})_{y}\ran$ ($F_{y}=F_{\mid G^{y}}$).  One uses the axioms for a locally compact groupoid to check the required properties.  For example, the continuity of $y\to \lan (F_{1})_{y}, (F_{2})_{y}\ran$ follows from the axiom that for $\phi\in C_{c}(G)$, the function $y\to \int_{G^{y}}\phi(g)\,d\la^{y}(g)$ is continuous.  Let $P_{G}$ be the Hilbert $C_{0}(Y)$-module completion
of $C_{c}(G)$, and $L^{2}(G)=E_{P_{G}}$,  the Hilbert bundle determined by $P_{G}$ as in Proposition~\ref{prop:Etop}.  It is easy to check that for each $y$, the image of $C_{c}(G)$ in $H_{y}$ is naturally identified as a pre-Hilbert space with $C_{c}(G^{y})$ with the $L^{2}(G^{y})$ inner product.  So 
the Hilbert space $(E_{P_{G}})_{y}=L^{2}(G^{y})$ (which justifies writing $E_{P_{G}}$ as $L^{2}(G)$).   The isomorphism $F\to \hat{F}$ from $C_{c}(G)$ into 
$C_{c}(Y,L^{2}(G))$ takes $F$ to the section $y\to F_{y}=\hat{F}(y)$, and the family of sets $U(F,\eps)$ forms a base for the topology of $L^{2}(G)$.   The $G$-action on $L^{2}(G)$ is the natural one: $g\xi_{s(g)}(h)=\xi_{s(g)}(g^{-1}h)$ ($h\in G^{r(g)}$) for $\xi_{s(g)}\in L^{2}(G^{s(g)})$.  We now show that this action is continuous for the topology of $L^{2}(G)$.

\begin{proposition}  \label{prop:L2Gactioncont}
The $G$-action is continuous on $L^{2}(G)$ (so that $L^{2}(G)$ is a $G$-Hilbert bundle and $P_{G}$ a $G$-Hilbert module).
\end{proposition}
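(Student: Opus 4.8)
The plan is to apply Proposition~\ref{prop:cont}: since $P_{G}$ is the completion of the pre-$G$-Hilbert module $C_{c}(G)$, whose image in $C_{0}(Y,L^{2}(G))$ is dense, and since (by the comments following Definition~\ref{def:Hilbertbundle}) the topology on $E=L^{2}(G)$ is already determined by the sets $U(F,\eps)$ with $F$ ranging over this dense subspace, it suffices to verify that for each fixed $\phi\in C_{c}(G)$ the map $g\mapsto g\hat{\phi}_{s(g)}$ is continuous from $G$ into $L^{2}(G)$. Here $g\hat{\phi}_{s(g)}\in L^{2}(G^{r(g)})$ is the function $h\mapsto \phi(g^{-1}h)$ for $h\in G^{r(g)}$, i.e.\ it is $(L_{g}\phi)_{r(g)}$ where $L_{g}\phi$ denotes left translation of $\phi$ by $g$.

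The key steps, in order, are as follows. First, fix $g_{0}\in G$ and a sequence $g_{n}\to g_{0}$; write $r_{n}=r(g_{n})$, $r_{0}=r(g_{0})$, and note $r_{n}\to r_{0}$ by continuity of $r$. I want to show $g_{n}\hat{\phi}_{s(g_{n})}\to g_{0}\hat{\phi}_{s(g_{0})}$ in $L^{2}(G)$, which by the description of the topology of $L^{2}(G)$ (a base of neighbourhoods of $\xi\in H_{r_{0}}$ being the $U(F,\eps)$ with $F(r_{0})=\xi$) means: pick $F\in C_{c}(G)$ with $F_{r_{0}}=g_{0}\hat{\phi}_{s(g_{0})}$ in $L^{2}(G^{r_{0}})$, and show $\|g_{n}\hat{\phi}_{s(g_{n})}-F_{r_{n}}\|_{L^{2}(G^{r_{n}})}\to 0$. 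Second, I estimate this $L^{2}$-norm by splitting: $\|g_{n}\hat{\phi}_{s(g_{n})}-F_{r_{n}}\|\le \|g_{n}\hat{\phi}_{s(g_{n})}-g_{0}\hat{\phi}_{s(g_{0})}\text{ (suitably transported)}\|+\|F_{r_{n}}-F_{r_{0}}\|$, where the second term tends to $0$ because $F\in C_{c}(G)$ and $y\mapsto\|F_{y}\|^{2}=\int_{G^{y}}|F(h)|^{2}\,d\la^{y}(h)$ is continuous (groupoid axiom). For the first term, the cleanest route is to reduce everything to a single integral: it suffices to show
\[
\int_{G^{r_{n}}}\bigl|\phi(g_{n}^{-1}h)-F(h)\bigr|^{2}\,d\la^{r_{n}}(h)\ \longrightarrow\ \int_{G^{r_{0}}}\bigl|\phi(g_{0}^{-1}h)-F(h)\bigr|^{2}\,d\la^{r_{0}}(h),
\]
and the right-hand side is $0$ by the choice of $F$. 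Third, I prove this convergence of integrals. The integrand $\psi_{n}(h)=|\phi(g_{n}^{-1}h)-F(h)|^{2}$ is supported, uniformly in $n$ (for large $n$), in a fixed compact set: indeed $\phi(g_{n}^{-1}h)\ne 0$ forces $g_{n}^{-1}h\in\mathrm{supp}\,\phi$, hence $h\in g_{n}(\mathrm{supp}\,\phi)$, and as $\{g_{n}\}$ lies in a compact neighbourhood of $g_{0}$, the continuity of multiplication confines $h$ to a fixed compact $K\subset G$; likewise $\mathrm{supp}\,F$ is compact. Moreover $\psi_{n}\to\psi_{0}$ uniformly on $G$ by uniform continuity of $\phi$ (again using $g_{n}^{-1}\to g_{0}^{-1}$ and continuity of multiplication, so $g_{n}^{-1}h\to g_{0}^{-1}h$ uniformly for $h$ in the compact set). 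Then one invokes the standard continuity property of the Haar system: for a net $\rho_{n}\to\rho_{0}$ in $C_{c}(G)$ in the inductive-limit topology (uniform convergence plus supports in a fixed compact), $\int_{G^{y_{n}}}\rho_{n}\,d\la^{y_{n}}\to\int_{G^{y_{0}}}\rho_{0}\,d\la^{y_{0}}$ whenever $y_{n}\to y_{0}$; applying this to $\rho_{n}=\psi_{n}$, $y_{n}=r_{n}$, $y_{0}=r_{0}$ gives the desired convergence.

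The main obstacle is the third step: keeping the supports of the relevant functions in $C_{c}(G)$ under control uniformly in $n$ while simultaneously getting uniform convergence of the integrands, so that the Haar-system continuity axiom applies to the family $\{|\phi(g_{n}^{-1}\cdot)-F|^{2}\}_{n}$ as a convergent sequence in the inductive-limit topology. This is where the local compactness and Hausdorffness of $G$, together with properness only insofar as it guarantees $L^{2}(G)$ is a genuine Hilbert bundle, are used. Everything else — the reduction via Proposition~\ref{prop:cont}, the density of $C_{c}(G)$ in $C_{0}(Y,L^{2}(G))$, the triangle-inequality splitting, and the unitarity bookkeeping — is routine. Finally, once continuity of the action is established, the parenthetical assertions follow immediately: $L^{2}(G)$ is then a $G$-Hilbert bundle by definition, and hence $P_{G}=C_{0}(Y,L^{2}(G))$ is a $G$-Hilbert module.
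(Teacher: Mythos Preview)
Your overall strategy is the same as the paper's: reduce via Proposition~\ref{prop:cont} and the density of $\widehat{C_{c}(G)}$ to showing that for fixed $\phi\in C_{c}(G)$ and $g_{n}\to g_{0}$ one has $g_{n}\hat{\phi}_{s(g_{n})}\to g_{0}\hat{\phi}_{s(g_{0})}$; then choose (the paper does this via Tietze's extension theorem) an $F\in C_{c}(G)$ with $F_{r_{0}}=g_{0}\hat{\phi}_{s(g_{0})}$ and show $\norm{g_{n}\hat{\phi}_{s(g_{n})}-F_{r_{n}}}_{2}\to 0$.

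There is, however, a gap in your third step. The functions $\psi_{n}(h)=|\phi(g_{n}^{-1}h)-F(h)|^{2}$ are \emph{not} elements of $C_{c}(G)$: each $\psi_{n}$ is defined only on the fiber $G^{r_{n}}$, since $g_{n}^{-1}h$ makes sense only when $r(h)=r(g_{n})$. Consequently the assertion ``$\psi_{n}\to\psi_{0}$ uniformly on $G$'' is not well-posed (the domains vary with $n$), and the inductive-limit continuity of the Haar system cannot be applied to the family $\{\psi_{n}\}$ as you propose. The phrase ``$g_{n}^{-1}h\to g_{0}^{-1}h$ uniformly for $h$ in the compact set'' has the same defect, and ``uniform continuity of $\phi$'' has no immediate meaning on a general locally compact groupoid.

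The paper closes this gap by arguing by contradiction rather than directly. Assuming $\norm{F_{r(g_{n})}-g_{n}\phi_{s(g_{n})}}_{2}\geq k>0$ along a subsequence, one bounds the $L^{2}$-norm by the sup-norm times $\la^{r_{n}}(K)^{1/2}$ (with $K$ your fixed compact set containing all supports) to extract points $h_{n}\in G^{r_{n}}\cap K$ at which $|F(h_{n})-\phi(g_{n}^{-1}h_{n})|$ is bounded below; compactness of $K$ gives $h_{n}\to h\in G^{r_{0}}$, and continuity of $F$, $\phi$, inversion and multiplication then yields $|F(h)-\phi(g_{0}^{-1}h)|>0$, contradicting the choice of $F$. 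This sidesteps any comparison of $\psi_{n}$ and $\psi_{0}$ on a common domain. Your argument can be repaired either along these lines or by invoking a two-variable continuity lemma for integrals $g\mapsto\int_{G^{r(g)}}\Psi(g,h)\,d\la^{r(g)}(h)$ with $\Psi$ continuous and compactly supported on $G*G$, but as written the step is incomplete.
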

\begin{proof}
From Proposition~\ref{prop:cont}, it suffices to show that if $\psi\in C_{0}(Y,L^{2}(G))$ and $g_{n}\to g$ in $G$, then 
$g_{n}\psi_{s(g_{n})}\to g\psi_{s(g)}$.  Since $\widehat{C_{c}(G)}$ is uniformly dense in $C_{0}(Y,L^{2}(G)) = \widehat{P_{G}}$ 
(Proposition~\ref{prop:Etop}), we can suppose that $\psi=\hat{F}$ where $F\in C_{c}(G)$.
By Tietze's extension theorem, there exists $F'\in C_{c}(G)$ such that $F'_{r(g)}=gF_{s(g)}$.  It is sufficient, then, to show that
$\norm{F'_{r(g_{n})} - g_{n}F_{s(g_{n})}}_{2}\to 0$ since the $U(F',\eps)$'s ($r(g)\in U$) form a base of neighborhoods for $F'_{r(g)}$ in $L^{2}(G)$.  Arguing by contradiction, suppose that the sequence \\
$\{\norm{F'_{r(g_{n})} - g_{n}F_{s(g_{n})}}_{2}\}$ does not converge to $0$.  We can then suppose that for some $k>0$, $\norm{F'_{r(g_{n})} - g_{n}F_{s(g_{n})}}_{2}\geq k$ for all $n$.  Let $D$ be a compact subset of $G$ containing the sequence $\{g_{n}\}$ and let 
$C=D supp(F)\cup supp (F')\subset G$.  Since $C$ is compact, $M=\sup_{u\in Y}\la^{u}(C^{u})<\infty$.  Then 
\[  [\sup\{\mid F'(h) - F(g_{n}^{-1}h)\mid: h\in G^{r(g_{n})}\cap C\}]^{2}M \geq \norm{F'_{r(g_{n})} - g_{n}F_{s(g_{n})}}^{2}_{2}\geq k^{2}. \]
So we can find $h_{n}\in G^{r(g_{n})}\cap C$ such that $\mid F'(h_{n}) - F(g_{n}^{-1}h_{n})\mid > \sqrt{k^{2}/(2M)}$.   By the compactness of $C$, we can suppose that 
$h_{n}\to h\in G^{r(g)}$, and thus obtain $\mid F'(h) - gF_{s(g)}(h)\mid >0$, contradicting $F'_{r(g)}=gF_{s(g)}$.
\end{proof}

$C_{0}(Y)$ itself is naturally a $G$-Hilbert module.  To see this, $C_{0}(Y)$ is, like every $\cs$, a Hilbert module over itself.  The Hilbert bundle determined by $C_{0}(Y)$ is, of course, just $Y\x \C$.  It is left to the reader to check that the topology determined on $E=Y\x \C$ is just the product topology.  The $G$-action on $Y$ is given by $(g,s(g),a)\to (r(g),a)$ (trivially continuous).

Let $E(i)=\{E(i)_{y}\}$ ($1\leq i\leq n$) be Hilbert bundles over $Y$ and $P(i)$ the Hilbert $C_{0}(Y)$-module $C_{0}(Y,E(i))$.  Let 
$E=E_{\oplus_{i=1}^{n}P(i)}$.  It is easy to check that $E=\oplus_{i=1}^{n} E(i)$ with the relative topology inherited from $E(1)\x \ldots E(n)$. (Note also that the elements of $C_{0}(Y,E)$ are of the form $F=(F_{1}, \ldots ,F_{n})$ where $F_{i}\in C_{0}(Y,E(i))$.)
Similarly if $1\leq i<\infty$, then $E=\oplus_{i=1}^{\infty} E(i)$ is defined to be $E_{\oplus_{i=1}^{\infty}P(i)}$.  
(Here (e.g. \cite[2.2.1)]{LeGall1}) $\oplus_{i=1}^{\infty} P_{i}$ consists of all sequences $\{p_{i}\}$, $p_{i}\in P_{i}$, such that 
$\sum_{i=1}^{\infty}\lan p_{i},p_{i}\ran$ is convergent in $C_{0}(Y)$.  The argument of, for example, \cite[pp.237-238]{Wegge-Olsen}, shows that 
$\oplus_{i=1}^{\infty} P_{i}$ is a Hilbert $C_{0}(Y)$-module with $C_{0}(Y)$-valued inner product given by 
$\lan\{p_{i}\},\{q_{i}\}\ran = \sum_{i=1}^{\infty}\lan p_{i},q_{i}\ran$.)  Then for each $y$, $E_{y}$ is the Hilbert space direct sum $\oplus_{i=1}^{\infty} E(i)_{y}$.  Using Proposition~\ref{prop:Etop}, the topology on $E$ can be conveniently described in terms of convergent sequences: $\xi^{n}\to \xi$ ($\xi^{n}=\{\xi^{n}_{i}\}, \xi=\{\xi_{i}\}$) if and only if $\xi^{n}_{i}\to \xi_{i}$ in $E(i)$ for all $i$ and 
$\sum_{i=N}^{\infty} \norm{\xi^{n}_{i}}^{2}\to 0$ as $N, n\to \infty$.  When $E(i)=E(1)$ for all $i$, then we write $E=E(1)^{\infty}$, corresponding to the module $P=P(1)^{\infty}$.  Using the preceding criterion for convergent sequences, it is straightforward to show that the Hilbert bundles 
$\oplus_{i=1}^{n} E(i), \oplus_{i=1}^{\infty} E(i)$ are $G$-Hilbert bundles in the natural way if the $E(i)$'s are $G$-Hilbert bundles.  Of course, 
$\oplus_{i=1}^{n} P(i), \oplus_{i=1}^{\infty} P(i)$ are then $G$-Hilbert modules.

We also require that for any $G$-Hilbert module $P$,
\begin{equation}  \label{eq:pinftyinfty}
(P^{\infty})^{\infty}\cong P^{\infty}.
\end{equation}
To prove this, using the Cantor diagonal process, one ``rearranges'' a sequence 
$\{\xi_{i}\}\in (P^{\infty})^{\infty}$, $\xi_{i}=\{\xi_{ij}\}$, $\xi_{ij}\in P$, as a sequence in $P^{\infty}$, and checks that the 
$C_{0}(Y)$-Hilbert module structure and the $G$-action are preserved. 

A number of natural $G$-Hilbert $C_{0}(Y)$-modules arise from other such modules as tensor products over $C_{0}(Y)$ 
(cf. \cite{Blanchard92,Blanchard95}).    See \cite[3.2.2]{LeGall1} for a pull-back approach to the construction of tensor product $G$-Hilbert modules.  
Let $P, Q$ be pre-Hilbert $C_{0}(Y)$-modules and form the algebraic balanced tensor product $P\otimes_{alg,C_{0}(Y)} Q$.  This is a pre-Hilbert $C_{0}(Y)$-module in the natural way, i.e. with $(p\otimes q)f=p\otimes qf=p\otimes fq=pf\otimes q$ and inner product given by 
$\lan p_{1}\otimes q_{1}, p_{2}\otimes q_{2}\ran=\lan p_{1},p_{2}\ran\lan q_{1},q_{2}\ran$.
The completion of $P\otimes_{alg,C_{0}(Y)} Q$, quotiented out by the null space of the norm induced by the inner product, is a Hilbert $C_{0}(Y)$-module
$P\otimes_{C_{0}(Y)} Q$.  (When $P, Q$ are Hilbert modules, the construction is a special case of the inner tensor product $P\otimes_{\phi} Q$  (\cite[13.5]{Blackadar}) with $\phi: C_{0}(Y)\to B(Q)$ where $\phi(f)q=fq$ - see \cite{Lance} and \cite[I.1]{Williams} for details of the construction of the inner tensor product.)  Note that $P\otimes_{alg,C_{0}(Y)} Q$ is a dense Hilbert submodule of  $\ov{P}\otimes_{C_{0}(Y)} \ov{Q}$, so that $\ov{P}\otimes_{C_{0}(Y)} \ov{Q}=P\otimes_{C_{0}(Y)} Q$.
Canonically, $(P\otimes_{C_{0}(Y)}  Q)_{y}$ is the Hilbert space tensor product $P_{y}\otimes Q_{y}$ and for $p\in P, q\in Q$, 
$\widehat{p\otimes q}(y)=\hat{p}(y)\otimes \hat{q}(y)$.  We write $E_{P\otimes_{C_{0}(Y)}  Q}=E_{P}\otimes E_{Q}$.  (We note that this construction of the tensor product of two Hilbert bundles over $Y$ cannot be defined, as for vector bundles, using charts in the usual way (as, for example, in \cite[1.2]{AtiyahK}).)  

\begin{proposition}    \label{prop:tensGHilb}
If $P, Q$ are $G$-pre-Hilbert $C_{0}(Y)$-modules, then $P\otimes_{C_{0}(Y)} Q$ is a $G$-Hilbert module, the $G$-action being the diagonal one.
\end{proposition}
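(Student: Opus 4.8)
The plan is to show that the fibrewise diagonal maps $\theta_{g}(\xi\otimes\eta)=g\xi\otimes g\eta$ assemble into a continuous $G$-action on $E:=E_{P\otimes_{C_{0}(Y)}Q}=E_{P}\otimes E_{Q}$. For $g\in G$ write $u=s(g)$, $v=r(g)$; by the remarks preceding the statement, $E_{u}=P_{u}\otimes Q_{u}$ and $E_{v}=P_{v}\otimes Q_{v}$ (Hilbert space tensor products), and, $\overline{P}$ and $\overline{Q}$ being $G$-Hilbert modules, $\xi\mapsto g\xi$ and $\eta\mapsto g\eta$ are unitaries $P_{u}\to P_{v}$ and $Q_{u}\to Q_{v}$. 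Hence $\theta_{g}:=(g|_{P_{u}})\otimes(g|_{Q_{u}})$ is a well-defined unitary $E_{u}\to E_{v}$, and since the $G$-actions on $E_{P}$ and $E_{Q}$ satisfy the groupoid-action identities fibrewise, so does $\{\theta_{g}\}$ (e.g.\ $\theta_{g'}\theta_{g}=\theta_{g'g}$ when $s(g')=r(g)$, and $\theta_{u}=\mathrm{id}$). So $\{\theta_{g}\}$ is an algebraic left $G$-action by unitaries, and the only substantial point is its continuity, for which the model is the proof of Proposition~\ref{prop:L2Gactioncont}.

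For continuity I would invoke Proposition~\ref{prop:cont}: it suffices to check that $g\mapsto\theta_{g}(F_{s(g)})$ is continuous $G\to E$ for each $F\in C_{0}(Y,E)$. Since $P\otimes_{\mathrm{alg},C_{0}(Y)}Q$ is dense in $P\otimes_{C_{0}(Y)}Q$, its image in $C_{0}(Y,E)$ under the unitary of Proposition~\ref{prop:Etop} is uniformly dense there, so, as in the proof of Proposition~\ref{prop:L2Gactioncont}, the topology of $E$ is determined by this dense subspace of sections; since the $\theta_{g}$ are linear and addition in $E$ is continuous, it is enough to take $F=\widehat{p\otimes q}$ with $p\in\overline{P}$, $q\in\overline{Q}$. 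Then $F_{s(g)}=p_{s(g)}\otimes q_{s(g)}$ and $\theta_{g}(F_{s(g)})=(gp_{s(g)})\otimes(gq_{s(g)})$. Fixing $g$ and a sequence $g_{n}\to g$ in $G$, I would choose (using Definition~\ref{def:Hilbertbundle}(iii) for $E_{P}$ and $E_{Q}$) $p'\in\overline{P}$ and $q'\in\overline{Q}$ with $\hat{p'}(r(g))=gp_{s(g)}$ and $\hat{q'}(r(g))=gq_{s(g)}$, so that $\widehat{p'\otimes q'}(r(g))=\theta_{g}(F_{s(g)})$; since the sets $U(\widehat{p'\otimes q'},\eps)$ (with $r(g)\in U$) form a neighbourhood base at this point, it then remains to show that
\[
\big\|\,(g_{n}p_{s(g_{n})})\otimes(g_{n}q_{s(g_{n})})-\hat{p'}(r(g_{n}))\otimes\hat{q'}(r(g_{n}))\,\big\|\longrightarrow 0.
\]

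This last estimate is the heart of the matter, and is where the topology of the tensor-product bundle gets used. Via the bilinear inequality $\|\xi\otimes\eta-\xi'\otimes\eta'\|\leq\|\xi-\xi'\|\,\|\eta\|+\|\xi'\|\,\|\eta-\eta'\|$ the displayed norm is bounded by
\[
\big\|g_{n}p_{s(g_{n})}-\hat{p'}(r(g_{n}))\big\|\,\big\|g_{n}q_{s(g_{n})}\big\|+\big\|\hat{p'}(r(g_{n}))\big\|\,\big\|g_{n}q_{s(g_{n})}-\hat{q'}(r(g_{n}))\big\|.
\]
Because $\overline{P}$ is a $G$-Hilbert module, $g\mapsto g\hat{p}(s(g))$ is continuous, so $g_{n}p_{s(g_{n})}\to gp_{s(g)}=\hat{p'}(r(g))$ in $E_{P}$; as $\hat{p'}$ is a continuous section taking that value at $r(g)$ and $r(g_{n})\to r(g)$, the triangle-inequality remark following Definition~\ref{def:Hilbertbundle} gives $\|g_{n}p_{s(g_{n})}-\hat{p'}(r(g_{n}))\|\to 0$, and similarly $\|g_{n}q_{s(g_{n})}-\hat{q'}(r(g_{n}))\|\to 0$. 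The remaining factors $\|g_{n}q_{s(g_{n})}\|$ and $\|\hat{p'}(r(g_{n}))\|$ stay bounded, since $g_{n}q_{s(g_{n})}\to gq_{s(g)}$ in $E_{Q}$ and $\hat{p'}(r(g_{n}))\to\hat{p'}(r(g))$ in $E_{P}$ while the norm functions on $E_{P}$ and $E_{Q}$ are continuous. Hence the bound tends to $0$, which gives the continuity of $\{\theta_{g}\}$; so $E$ is a $G$-Hilbert bundle and $P\otimes_{C_{0}(Y)}Q$ a $G$-Hilbert module with the diagonal action, and the same computation restricted to the images of $P$ and $Q$ shows, en passant, that $P\otimes_{\mathrm{alg},C_{0}(Y)}Q$ is a pre-$G$-Hilbert module. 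The main obstacle is exactly this continuity step — pushing the continuity of the $G$-actions on $E_{P}$ and $E_{Q}$ through the bilinear estimate and the neighbourhood-base description of the tensor-product topology; the algebraic verifications in the first step are routine.
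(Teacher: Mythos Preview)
Your proposal is correct and follows essentially the same approach as the paper's proof: both reduce continuity via Proposition~\ref{prop:cont} to sections coming from the dense algebraic tensor product, pick sections $p',q'$ realizing the target value $gp_{s(g)}\otimes gq_{s(g)}$, and then apply the bilinear inequality $\|\xi\otimes\eta-\xi'\otimes\eta'\|\leq\|\xi-\xi'\|\|\eta\|+\|\xi'\|\|\eta-\eta'\|$ together with continuity of the $G$-actions on $E_{P}$ and $E_{Q}$. The only cosmetic difference is that you reduce to a single elementary tensor $\widehat{p\otimes q}$ (justified by linearity of $\theta_{g}$ and continuity of addition in $E$), whereas the paper carries a finite sum $\sum_{i}p_{i}\otimes q_{i}$ through the estimate directly.
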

\begin{proof}
By definition of $P\otimes_{C_{0}(Y)} Q$, we can assume that $P, Q$ are $G$-Hilbert modules.  It is obvious that $G$ acts isometrically on 
$E_{P\otimes_{C_{0}(Y))}  Q}=\\
\cup_{y\in Y} P_{y}\otimes Q_{y}$.  For $G$-continuity, we only need to check Proposition~\ref{prop:cont} when 
$F = \hat{v}$ where $v=\sum_{i=1}^{n} p_{i}\otimes q_{i} \in P\otimes_{alg,C_{0}(Y)} Q$.  
Suppose then that $y_{r}\to y$ in $Y$, $g_{r}\to g$ in $G$ with $s(g_{r})=y_{r}$.   Since $F$ is continuous, 
$F(y_{r})\to \sum_{i=1}^{n} \hat{p_{i}}(y)\otimes \hat{q_{i}}(y)$.  Since $P, Q$ are $G$-Hilbert modules, for each $i$, $g_{r}\hat{p_{i}}(y_{r})\to g\hat{p_{i}}(y),    
g_{r}\hat{q_{i}}(y_{r})\to g\hat{q_{i}}(y)$ in $E_{P}, E_{Q}$ respectively.   Let $p'_{i}\in P, q'_{i}\in Q$ be such that $\widehat{p'_{i}}(r(g))=
g\widehat{p_{i}}(y)$, 
$\widehat{q'_{i}}(r(g))=g\widehat{q_{i}}(y)$, and set $w=\sum_{i=1}^{n}p'_{i}\otimes q'_{i}\in P\otimes_{alg,C_{0}(Y)} Q$.  Let $z_{r}=r(g_{r})$.  Then 
$\norm{\hat{w}(z_{r}) - \sum_{i=1}^{n} g_{r}\widehat{p_{i}}(y_{r})\otimes g_{r}\widehat{q_{i}}(y_{r})}
\leq \sum_{i=1}^{n}[\norm{\widehat{p'_{i}}(z_{r}) - g_{r}\widehat{p_{i}}(y_{r})}\norm{g_{r}\widehat{q_{i}}(y_{r})}
+ \norm{\widehat{p'_{i}}(z_{r})}\norm{\widehat{q'_{i}}(y_{r}) - g_{r}\widehat{q_{i}}(y_{r})}]\to 0$.  
Since $\hat{w}(z_{r})\to \hat{w}(r(g))=gF(y)$, $g_{r}F(y_{r})=\sum_{i=1}^{n} g_{r}\widehat{p_{i}}(y_{r})\otimes g_{r}\widehat{q_{i}}(y_{r})\to gF(y)$.  So 
$P\otimes_{C_{0}(Y)} Q$ is a $G$-Hilbert module.
\end{proof}

Next, we require the result that for any $G$-Hilbert modules $P, Q$, we have that as $G$-Hilbert modules, 
\begin{equation}    \label{eq:pqinfty}
(P^{\infty}\otimes_{C_{0}(Y)} Q)\cong (P\otimes_{C_{0}(Y)} Q^{\infty})\cong (P\otimes_{C_{0}(Y)} Q)^{\infty}.   
\end{equation}
Let us prove that $(P^{\infty}\otimes_{C_{0}(Y)} Q)\cong (P\otimes_{C_{0}(Y)} Q)^{\infty}$, the other equality being proved similarly.  Let $R$ be the  dense subspace of $P^{\infty}$ whose elements are the finite sequences $r=(p_{1},\ldots ,p_{n},0,0,\ldots)$ with $p_{i}\in P$.  Define a 
$C_{0}(Y)$-module map
$\al:R\otimes_{alg,C_{0}(Y)} Q\to (P\otimes_{C_{0}(Y)} Q)^{\infty}$ by setting $\al(r\otimes q)=
(p_{1}\otimes q,\ldots ,p_{n}\otimes q,0,0,\ldots)$.  It is easily checked that $\al$ is well-defined, and preserves the $C_{0}(Y)$-inner product: $\lan\al(r\otimes q),\al(r'\otimes q')\ran=\lan r,r'\ran\lan q,q'\ran =\lan r\otimes q,r'\otimes q'\ran$.  The range of $\al$ is onto a dense subspace of $(P\otimes_{C_{0}(Y)} Q)^{\infty}$ and preserves the $G$-action, so the result follows.

Of particular importance is the case of the $G$-Hilbert module $P\otimes_{C_{0}(Y)} P_{G}$.  We write 
$E_{P\otimes_{C_{0}(Y)} P_{G}}=L^{2}(G)\otimes E$ (or $E\otimes L^{2}(G)$) where $E=E_{P}$.  Here $L^{2}(G)\otimes E$ is the Hilbert bundle over $Y$ with 
$(L^{2}(G)\otimes E)_{y}=L^{2}(G^{y},E_{y})$ and a dense subspace of $C_{0}(Y, L^{2}(G)\otimes E)$, determining its topology as earlier, is given by the span of sections of the form $\hat{h}\otimes \hat{p}$ ($h\in C_{c}(G)$) where\\ $(\hat{h}\otimes \hat{p})(y)=h_{\mid G^{y}}\otimes \hat{p}(y)$.  A section $k$ of $L^{2}(G)\otimes E$
is invariant if for all $g\in G$, $gk_{s(g)}(g^{-1}h)=k_{r(g)}(h)$ ($h\in G^{r(g)}$) as maps in $L^{2}(G^{r(g)}, E_{r(g)})$.  We now identify a certain dense linear subspace $C_{c}(G,r^{*}E)$ of $C_{0}(Y, L^{2}(G)\otimes E)$ (cf. \cite{renrep}).    Here,  
$C_{c}(G,r^{*}E)$ is the set of continuous, compactly supported functions $\phi$ from $G$ into $E$ such that for all $g\in G$, $\phi(g)\in E_{r(g)}$.
For each $y\in Y$ and $\phi\in C_{c}(G,r^{*}E)$, let $\hat{\phi}(y)=\phi_{y}$, the restriction of $\phi$ to $G^{y}$.  Then 
$\hat{\phi}(y)\in C_{c}(G^{y},E_{y})\subset L^{2}(G^{y},E_{y})=(L^{2}(G)\otimes E)_{y}$ so that $\hat{\phi}$ is a section of $L^{2}(G)\otimes E$.  
The section norm on $C_{c}(G,r^{*}E)$ is then given by: $\norm{\phi}=\sup_{y\in Y}\norm{\phi_{y}}$.

\begin{proposition}   \label{prop:ppg}
Let $P$ be a $G$-Hilbert module and $E=E_{P}$.  Then $C_{c}(G,r^{*}E)^{\widehat{}}$ is a dense subspace of $C_{0}(Y, L^{2}(G)\otimes E)$, and contains all functions of the form $\hat{h}\otimes \hat{p}$ above.
\end{proposition}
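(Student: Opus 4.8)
The plan is to establish three things in turn: that $C_{c}(G,r^{*}E)^{\widehat{}}$ is a linear space of sections of $L^{2}(G)\otimes E$ containing every $\hat{h}\otimes\hat{p}$; that it is in fact contained in $C_{0}(Y,L^{2}(G)\otimes E)$; and finally that it is dense there. The last point will be immediate, since by the discussion preceding the proposition the span of the $\hat{h}\otimes\hat{p}$ is already dense in $C_{0}(Y,L^{2}(G)\otimes E)$ and that span lies in $C_{c}(G,r^{*}E)^{\widehat{}}$ by the first point; so the only real work is the containment.

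For the first point, given $h\in C_{c}(G)$ and $p\in P$ I would set $\phi_{h,p}(g)=h(g)\,\hat{p}(r(g))$. Since $\hat{p}\in C_{0}(Y,E)$ is continuous, $r:G\to Y$ is continuous, and the scalar multiplication and (by Definition~\ref{def:Hilbertbundle}(i)) addition maps on $E$ are continuous, $\phi_{h,p}$ is a continuous function $G\to E$ with $\phi_{h,p}(g)\in E_{r(g)}$ and support inside $\mathrm{supp}(h)$; thus $\phi_{h,p}\in C_{c}(G,r^{*}E)$. Restricting to $G^{y}$, $\widehat{\phi_{h,p}}(y)$ is the element $g\mapsto h(g)\hat{p}(y)$ of $L^{2}(G^{y},E_{y})$, that is $h_{\mid G^{y}}\otimes\hat{p}(y)=(\hat{h}\otimes\hat{p})(y)$, so $\widehat{\phi_{h,p}}=\hat{h}\otimes\hat{p}$. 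Since $\phi\mapsto\hat{\phi}$ is linear and $C_{c}(G,r^{*}E)$ is a vector space, $C_{c}(G,r^{*}E)^{\widehat{}}$ is a linear subspace (of the sections of $L^{2}(G)\otimes E$) containing the span of the $\hat{h}\otimes\hat{p}$, as required.

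For the containment, fix $\phi\in C_{c}(G,r^{*}E)$, $\eps>0$, and a relatively compact open $V\subset G$ with $\mathrm{supp}(\phi)\subset V$. For each $g_{0}\in\ov{V}$, use Definition~\ref{def:Hilbertbundle}(iii) (equivalently, that $\hat{P}$ surjects onto each fibre, by Proposition~\ref{prop:Etop}) to pick $p_{g_{0}}\in P$ with $\hat{p}_{g_{0}}(r(g_{0}))=\phi(g_{0})$ (taking $p_{g_{0}}=0$ when $\phi(g_{0})=0$); by continuity of $g\mapsto\phi(g)-\hat{p}_{g_{0}}(r(g))$ and of the norm function on $E$, there is a relatively compact open $W_{g_{0}}\ni g_{0}$ on which $\norm{\phi(g)-\hat{p}_{g_{0}}(r(g))}<\eps$. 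Cover the compact set $\ov{V}$ by finitely many $W_{g_{1}},\dots,W_{g_{n}}$, let $\{h_{j}\}$ be a partition of unity for $\ov{V}$ subordinate to these (as stated in the Introduction), and set $\psi=\sum_{j=1}^{n}h_{j}\,(\hat{p}_{g_{j}}\circ r)$, a function in $C_{c}(G,r^{*}E)$ with $\widehat{\psi}=\sum_{j}\hat{h}_{j}\otimes\hat{p}_{g_{j}}$. From $\phi(g)-\psi(g)=\sum_{j}h_{j}(g)\bigl(\phi(g)-\hat{p}_{g_{j}}(r(g))\bigr)+\bigl(1-\textstyle\sum_{j}h_{j}(g)\bigr)\phi(g)$, the first sum has norm $\le\eps\sum_{j}h_{j}(g)\le\eps$, while the last term vanishes (it is $0$ on $\ov{V}$, and $\phi(g)=0$ off $\ov{V}\supset\mathrm{supp}(\phi)$); hence $\sup_{g\in G}\norm{\phi(g)-\psi(g)}\le\eps$. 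Both $\phi$ and $\psi$ are supported in a common compact $C\subset G$, so integrating over $G^{y}$ gives $\norm{\widehat{\phi}(y)-\widehat{\psi}(y)}_{2}^{2}\le\eps^{2}\la^{y}(C^{y})\le\eps^{2}M$ for all $y$, where $M=\sup_{y}\la^{y}(C^{y})<\infty$; thus $\norm{\widehat{\phi}-\widehat{\psi}}\le\eps\sqrt{M}$ in the section norm. Since each $\widehat{\psi}$ lies in $C_{0}(Y,L^{2}(G)\otimes E)$ (being a sum of sections $\hat{h}_{j}\otimes\hat{p}_{g_{j}}$) and $C_{0}(Y,L^{2}(G)\otimes E)$ is complete by Proposition~\ref{prop:secondctble}, letting $\eps\to0$ shows $\widehat{\phi}\in C_{0}(Y,L^{2}(G)\otimes E)$.

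The main obstacle is precisely the bookkeeping in this last step: one must arrange the finite sum $\psi$ to agree with $\phi$ up to $\eps$ \emph{uniformly on $G$} and \emph{with support in a fixed compact set}, so that the sup-norm estimate on $G$ converts into a fibrewise $L^{2}$ (section-norm) estimate; this is where local compactness of $G$ enters, through finiteness of $M=\sup_{y}\la^{y}(C^{y})$, and where one has to be a little careful near the boundary of $\mathrm{supp}(\phi)$ so that $\psi$, which a priori is supported on $\bigcup_{j}W_{g_{j}}$, does not spoil the estimate where $\phi$ has already died. The remaining ingredients---the density of the span of the $\hat{h}\otimes\hat{p}$ and the completeness of $C_{0}(Y,L^{2}(G)\otimes E)$---are quoted from earlier in the paper.
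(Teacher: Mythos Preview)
Your proof is correct and follows essentially the same approach as the paper's: show that each $\hat{h}\otimes\hat{p}$ comes from $C_{c}(G,r^{*}E)$, then approximate an arbitrary $\phi\in C_{c}(G,r^{*}E)$ in sup-norm by a finite sum of simple tensors via a partition of unity, and convert this to a section-norm estimate using $\sup_{y}\la^{y}(C^{y})<\infty$. Your version is in fact slightly streamlined---the paper introduces auxiliary bump functions $h_{g}\in C_{c}(G)$ with $h_{g}(g)=1$ and approximates by $\sum_{i}f_{i}h_{g_{i}}\otimes p_{g_{i}}$, whereas you observe (correctly) that the partition-of-unity functions themselves already serve as the $C_{c}(G)$-factors, and your decomposition $\phi-\psi=\sum_{j}h_{j}(\phi-\hat{p}_{g_{j}}\circ r)+(1-\sum_{j}h_{j})\phi$ handles the boundary of $\mathrm{supp}(\phi)$ cleanly.
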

\begin{proof}
Clearly, $\hat{h}\otimes \hat{p}\in C_{c}(G,r^{*}E)$ since the map $g\to h(g)\hat{p}(r(g))$ is continuous.  For the rest of the proposition, the span of such functions $\hat{h}\otimes \hat{p}$ is uniformly dense in $C_{0}(Y,L^{2}(G)\otimes E)$, so it is enough to show that every $\hat{\phi}$ 
($\phi\in C_{c}(G,r^{*}E)$) is in the uniform closure of this span.

To this end, let $H=supp(\phi)$.  Let $y_{0}\in Y$.  Let $W$ be a compact subset of $G$ such that $H\subset W^{0}$.  Let $\eps>0$.  For each $g\in H$, let $p_{g}\in P$ be such that $\widehat{p_{g}}(r(g))=\phi(g)$.  Let $h_{g}\in C_{c}(G)$ be such that $h_{g}(g)=1$.  By continuity, there exists an open neighborhood $U_{g}$ of $g$ in $G$ such that $U_{g}\subset W$ and such that for all $g'\in U_{g}$, 
\[  \norm{\phi(g') - h_{g}(g')\widehat{p_{g}}(r(g'))}<\eta=\eps/[\sup_{y\in Y}\la^{y}(W)^{1/2}+1].  \]
Since $H$ is compact, it is covered by a finite number of the $U_{g}$'s, say $U_{g_{1}}, \ldots ,U_{g_{n}}$.  Taking a partition of unity, there exist functions $f_{i}\in C_{c}(U_{g_{i}})$, $f_{i}\geq 0$,
$\sum_{i=1}^{n} f_{i}=1$ on $H$ and $\sum_{i=1}^{n} f_{i}\leq 1$ on $G$.  Then for $g'\in W$,\\ 
$\norm{\phi(g') - \sum_{i=1}^{n} f_{i}(g')h_{g_{i}}(g')\hat{p_{g_{i}}}(r(g'))}<\eta$.  It follows that for $y\in Y$,
\[     \norm{\phi_{y} - \sum_{i=1}^{n} (f_{i}h_{g_{i}}\otimes p_{g_{i}})_{y}}_{2}<\eps.     \]
So $\phi\in C_{0}(Y,L^{2}(G)\otimes E)$.
\end{proof}

We now note two simple results on the tensor products of two $G$-Hilbert modules.  First, if $P$ is a $G$-Hilbert module then 
\begin{equation}   \label{eq:YPP}
C_{0}(Y)\otimes_{C_{0}(Y)} P\cong P.
\end{equation}
The natural isomorphism is given by the equivariant Hilbert module map determined by: $f\otimes P\to fp$ ($f\in C_{0}(Y), p\in P$).  Next, it is left to the reader to check that if $P, Q, R$ are $G$-Hilbert modules, then the Hilbert module direct sum $P\oplus Q$ is a $G$-Hilbert module in the obvious way, and 
\begin{equation}  \label{eq:PQR}
(P\oplus Q)\otimes_{C_{0}(Y)} R\cong (P\otimes_{C_{0}(Y)} R)\oplus (Q\otimes_{C_{0}(Y)} R).
\end{equation}
The final proposition of this section is a groupoid version of \cite[Lemma 2.3]{MingoPhillips} (which applies to the group case).  

\begin{proposition}  \label{prop:Gcong}
Let $P, Q$ be $G$-Hilbert modules with $P\cong Q$ as Hilbert $C_{0}(Y)$-modules.  Then 
$P\otimes_{C_{0}(Y)} P_{G}\cong Q\otimes_{C_{0}(Y)} P_{G}$ as $G$-Hilbert modules.
\end{proposition}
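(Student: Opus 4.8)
The plan is to build an explicit $G$-equivariant unitary out of the given (possibly non-equivariant) Hilbert $C_{0}(Y)$-module unitary $U:P\to Q$, by ``twisting'' it with the $G$-actions on the bundles $E_{P}$ and $E_{Q}$; this is the groupoid counterpart of the absorption argument behind \cite[Lemma 2.3]{MingoPhillips}. By Proposition~\ref{prop:Etop} and the construction of $L^{2}(G)\otimes E_{P}$ given just before Proposition~\ref{prop:ppg}, the $G$-Hilbert bundle of $P\otimes_{C_{0}(Y)}P_{G}$ is $L^{2}(G)\otimes E_{P}$, with fibre $L^{2}(G^{y},P_{y})$ over $y$ and diagonal $G$-action $(g_{0}k)(h)=(g_{0})_{P}\,k(g_{0}^{-1}h)$ for $h\in G^{r(g_{0})}$; here $g_{P}$ denotes the unitary $P_{s(g)}\to P_{r(g)}$ coming from the action on $E_{P}$, and similarly $g_{Q}$. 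Let $U_{y}:P_{y}\to Q_{y}$ be the (automatically unitary) fibre maps of $U$ supplied by Proposition~\ref{prop:Etop}. On the dense subspace $\widehat{C_{c}(G,r^{*}E_{P})}$ of $C_{0}(Y,L^{2}(G)\otimes E_{P})$ (Proposition~\ref{prop:ppg}) I would define a map $W$ by
\[
(W\phi)(h)=h_{Q}\,U_{s(h)}\,h_{P}^{-1}\,\phi(h)\in Q_{r(h)}\qquad(\phi\in C_{c}(G,r^{*}E_{P}),\ h\in G).
\]

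First I would check that $W\phi\in C_{c}(G,r^{*}E_{Q})$: it has the same compact support as $\phi$, and it is continuous because inversion in $G$ is continuous, the $G$-actions on $E_{P}$ and $E_{Q}$ are continuous, and $U$, viewed through Proposition~\ref{prop:Etop} as a Hilbert bundle morphism, has continuous fibre action --- so $h\mapsto h_{P}^{-1}\phi(h)$, then $h\mapsto U_{s(h)}\bigl(h_{P}^{-1}\phi(h)\bigr)$, then $h\mapsto h_{Q}(\,\cdot\,)$ are each continuous. Since each $h_{Q}U_{s(h)}h_{P}^{-1}:P_{r(h)}\to Q_{r(h)}$ is a composition of unitaries, a fibrewise integration gives $\langle\widehat{W\phi},\widehat{W\phi}\rangle(y)=\int_{G^{y}}\|\phi(h)\|^{2}\,d\lambda^{y}(h)=\langle\widehat{\phi},\widehat{\phi}\rangle(y)$, and by polarization $W$ preserves the $C_{0}(Y)$-valued inner product. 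Since the module action on these sections is $(\phi f)(h)=f(r(h))\phi(h)$ (as one sees by computing $\widehat{p\otimes F}(y)=\hat{p}(y)\otimes\hat{F}(y)$ in $L^{2}(G^{y},P_{y})$), $W$ is $C_{0}(Y)$-linear. Hence $W$ extends to an isometric Hilbert $C_{0}(Y)$-module map $P\otimes_{C_{0}(Y)}P_{G}\to Q\otimes_{C_{0}(Y)}P_{G}$; and since $\phi\mapsto\bigl(h\mapsto h_{P}U_{s(h)}^{-1}h_{Q}^{-1}\phi(h)\bigr)$ is a two-sided inverse on the dense subspaces, $W$ has dense range and is therefore unitary.

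It then remains to verify $G$-equivariance, i.e. $W_{r(g_{0})}\circ(g_{0})_{P\otimes P_{G}}=(g_{0})_{Q\otimes P_{G}}\circ W_{s(g_{0})}$ on $L^{2}(G^{s(g_{0})},P_{s(g_{0})})$ for each $g_{0}\in G$. Evaluating both sides on a vector $k$ and a point $h\in G^{r(g_{0})}$, and using the action identities $h_{P}^{-1}(g_{0})_{P}=(g_{0}^{-1}h)_{P}^{-1}$, $(g_{0})_{Q}(g_{0}^{-1}h)_{Q}=h_{Q}$ together with $s(g_{0}^{-1}h)=s(h)$, both sides collapse to the common expression $h_{Q}\,U_{s(h)}\,(g_{0}^{-1}h)_{P}^{-1}\,k(g_{0}^{-1}h)$. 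Thus $W$ is $G$-equivariant, hence so is $W^{*}$ as noted before this proposition, and $W:P\otimes_{C_{0}(Y)}P_{G}\to Q\otimes_{C_{0}(Y)}P_{G}$ is the required $G$-equivariant unitary.

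I expect the only genuinely nonroutine point to be the first step --- that the twisted section $h\mapsto h_{Q}U_{s(h)}h_{P}^{-1}\phi(h)$ really is continuous and compactly supported with values in $E_{Q}$. This is precisely where the broader setting of the paper matters: unlike the locally trivial bundles of \cite{Phillips}, $E_{P}$ and $E_{Q}$ need not be locally trivial, so the continuity must be extracted directly from the continuity of the groupoid actions and from $U$ being a Hilbert bundle morphism in the sense of Proposition~\ref{prop:Etop}. Everything else --- the isometry, $C_{0}(Y)$-linearity, density of the range, and the groupoid algebra underlying the equivariance identity --- is routine bookkeeping.
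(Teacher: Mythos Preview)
Your proposal is correct and follows essentially the same approach as the paper: your map $W$ with $(W\phi)(h)=h_{Q}\,U_{s(h)}\,h_{P}^{-1}\,\phi(h)$ is exactly the paper's map $V$ given by $V\phi(g)=gU_{s(g)}(g^{-1}\phi(g))$, merely written with explicit subscripts $h_{P},h_{Q}$ for the two bundle actions instead of the paper's unlabelled $g,g^{-1}$. The verification of continuity, the isometry and inverse formulas, and the equivariance computation all match the paper's argument line by line.
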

\begin{proof}
Let $E=E_{P}, F=E_{Q}$.  By assumption, there exists a Hilbert module unitary $U:P\to Q$.  For $\phi\in C_{c}(G,r^{*}E)$, define $V\phi:G\to r^{*}F$ by:
\begin{equation}   \label{eq:Vphi}
V\phi(g)=gU_{s(g)}(g^{-1}\phi(g)).
\end{equation}
Using the continuity of $\Phi_{U}=\{U_{y}\}$ and of the $G$-actions of $E, F$, we see that $V\phi$ belongs to $C_{c}(G,r^{*}F)$.
Regard, as earlier, $C_{c}(G,r^{*}E), C_{c}(G,r^{*}F)$ fibered over $Y$ (with $\phi\to \{\phi_{y}\}$).  Then $V$ is a fiber preserving isomorphism onto  $C_{c}(G,r^{*}F)$  with $V^{-1}\chi(g)=gU_{s(g)}^{*}(g^{-1}\chi(g))$.  Further,
\[ \lan (V\phi)_{y},(V\psi)_{y}\ran = \int \lan gU_{s(g)}(g^{-1}\phi_{y}(g)), gU_{s(g)}(g^{-1}\psi_{y}(g))\ran\,d\la^{y}(g)=\lan \phi_{y},\psi_{y}\ran  \]
so that $V$ preserves inner products.  So $V$ extends to a Hilbert module unitary from $C_{0}(Y,L^{2}(G)\otimes E)\to C_{0}(Y,L^{2}(G)\otimes F)$, using Proposition~\ref{prop:ppg} and Proposition~\ref{prop:Etop}.  It remains to show that $V$ is $G$-equivariant.  We note first that by (\ref{eq:Vphi}),
$V_{y}$ is given by: $V_{y}\xi(g)=gU_{s(g)}(g^{-1}\xi(g))$ for $\xi\in L^{2}(G^{y},E_{y}), g\in G^{y}$.  Then for $g, h\in G^{y}$,
$[g(V_{s(g)}\xi_{s(g)})](h)=g[V_{s(g)}\xi_{s(g)}(g^{-1}h)]=\\
g(g^{-1}h)[U_{s(h)}((g^{-1}h)^{-1}\xi_{s(g)}(g^{-1}h))]=
h[U_{s(h)}(h^{-1}[(g\xi_{s(g)})(h)])]=\\
V_{r(g)}(g\xi_{s(g)})(h)$, so that $gV_{s(g)}=V_{r(g)}g$ and $V$ is equivariant.
\end{proof}

\section{Stabilization}
In this section we establish the proper groupoid stabilization theorem.  Throughout, $G$ is a proper groupoid and $P$ a $G$-Hilbert module.  We require two preliminary propositions.  The first of these is the general groupoid version of \cite[Lemma 2.8]{Phillips}.

\begin{proposition} \label{prop:myphi}
There exists a continuous, invariant section $\phi$ of the Hilbert bundle $L^{2}(G)^{\infty}$ such that $\norm{\phi(y)}_{2}=1$ for all $y$.  Locally, 
$\phi(y)$ is of the form 
\[  ((\psi_{1})_{\mid G^{y}},\ldots ,(\psi_{n})_{\mid G^{y}},0,\ldots)   \]
where $\psi_{i}\in C_{c}(G)$.
\end{proposition}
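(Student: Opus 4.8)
The plan is to build $\phi$ by hand, using a partition of unity on the orbit space together with a fibrewise renormalization which — crucially — turns out to be $G$-invariant, so that it can be absorbed into the construction. I would first record three observations about $L^{2}(G)$. (1) If $\zeta$ is any function on $Y$ and $k_{y}=(\zeta\circ s)_{\mid G^{y}}$, then $k$ is an invariant section of $L^{2}(G)$, since for $r(h)=r(g)$ one has $(gk_{s(g)})(h)=k_{s(g)}(g^{-1}h)=\zeta(s(g^{-1}h))=\zeta(s(h))=k_{r(g)}(h)$. (2) For $\zeta\in C_{c}(Y)$, properness of $G$ (i.e.\ of $(r,s)$) gives that $\{g\in G^{y}:s(g)\in supp(\zeta)\}=(r,s)^{-1}(\{y\}\x supp(\zeta))$ is compact, so $(\zeta\circ s)_{\mid G^{y}}\in C_{c}(G^{y})\subset L^{2}(G^{y})$; moreover on any relatively compact open $V\subseteq Y$ one has $(\zeta\circ s)_{\mid G^{y}}=\widehat{\psi}(y)$ for $y\in V$, where $\psi=(\zeta\circ s)b\in C_{c}(G)$ and $b\in C_{c}(G)$ is chosen with $b\equiv 1$ on the compact set $(r,s)^{-1}(\ov{V}\x supp(\zeta))$; in particular $y\to(\zeta\circ s)_{\mid G^{y}}$ is a continuous section of $L^{2}(G)$ (locally of the form $\widehat{\psi}$, $\psi\in C_{c}(G)$, by Proposition~\ref{prop:Etop}). (3) By left invariance of the Haar system, for $\zeta\in C_{c}(Y)$ with $\zeta\geq 0$ the function $D_{\zeta}(y)=\int_{G^{y}}\zeta(s(h))\,d\la^{y}(h)$ is $G$-invariant, and it is continuous (on any compact set it agrees with $y\to\int_{G^{y}}(\zeta\circ s)b\,d\la^{y}$, where $(\zeta\circ s)b\in C_{c}(G)$).

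Next I would pass to the orbit space $Z$ of $G$ (i.e.\ $Y$ modulo the relation identifying $s(g)$ with $r(g)$), which is second countable, locally compact and Hausdorff because $G$ is proper, with $q:Y\to Z$ the open quotient map. Choose a locally finite cover $\{\tilde{W}_{i}\}_{i\in I}$ of $Z$ by relatively compact open sets and a subordinate partition of unity $\{\tilde{f}_{i}\}$, and put $f_{i}=\tilde{f}_{i}\circ q$, so each $f_{i}$ is continuous, $G$-invariant, and $\sum_{i}f_{i}=1$ on $Y$. For each $i$, cover the compact set $supp(\tilde{f}_{i})$ by finitely many sets $q(O)$ with $O\subseteq Y$ relatively compact open, let $V_{i}$ be the union of those $O$'s, and choose $\chi_{i}\in C_{c}(V_{i})$ with $\chi_{i}\geq 0$ and $q(\{\chi_{i}>0\})\supseteq supp(\tilde{f}_{i})$. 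Then $D_{i}:=D_{\chi_{i}^{2}}$ is continuous, $G$-invariant, and strictly positive on $q^{-1}(supp(\tilde{f}_{i}))\supseteq supp(f_{i})$ (for such $y$ the $G$-orbit of $y$ meets $\{\chi_{i}>0\}$, and $\la^{y}$ has full support on $G^{y}$). Define
\[ \zeta_{i}=\frac{\sqrt{f_{i}}}{\sqrt{D_{i}}}\,\chi_{i}\ \mbox{ on }\ \{f_{i}>0\},\qquad \zeta_{i}=0\ \mbox{ elsewhere}. \]
Since $\sqrt{f_{i}}\,\chi_{i}\in C_{c}(Y)$, $D_{i}>0$ on $supp(f_{i})$, and $\zeta_{i}$ vanishes off $supp(f_{i})$, $\zeta_{i}$ is continuous with $supp(\zeta_{i})\subseteq supp(\chi_{i})\subseteq V_{i}$, so $\zeta_{i}\in C_{c}(Y)$. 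Finally set $\phi_{i}(y)=(\zeta_{i}\circ s)_{\mid G^{y}}$ and $\phi=(\phi_{i})_{i\in I}$, a section of $L^{2}(G)^{\infty}$.

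It then remains to check the three assertions, which should be routine given the set-up. By observation (1) each $\phi_{i}$ is an invariant section of $L^{2}(G)$, and the action on $L^{2}(G)^{\infty}$ is diagonal, so $\phi$ is invariant. Because $f_{i}$ and $D_{i}$ are $G$-invariant, for $h\in G^{y}$ one gets $\zeta_{i}(s(h))=\sqrt{f_{i}(y)/D_{i}(y)}\,\chi_{i}(s(h))$ when $f_{i}(y)>0$, whence $\norm{\phi_{i}(y)}_{2}^{2}=(f_{i}(y)/D_{i}(y))\int_{G^{y}}\chi_{i}(s(h))^{2}\,d\la^{y}(h)=f_{i}(y)$ (and $=0=f_{i}(y)$ if $f_{i}(y)=0$); hence $\norm{\phi(y)}_{2}^{2}=\sum_{i}f_{i}(y)=1$ for all $y$, which in particular shows $\phi(y)\in(L^{2}(G)^{\infty})_{y}$. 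For continuity and the local form, fix a relatively compact open $V\subseteq Y$; only the finitely many indices $i$ for which $\{\tilde{f}_{i}>0\}$ meets the relatively compact set $q(V)$ can have $\phi_{i}\not\equiv 0$ on $V$ (here local finiteness of $\{\tilde{f}_{i}\}$ is used), and for those indices $\phi_{i}(y)=\widehat{\psi_{i}}(y)$ for $y\in V$, with $\psi_{i}=(\zeta_{i}\circ s)b_{V}\in C_{c}(G)$ for a single $b_{V}\in C_{c}(G)$ equal to $1$ on the relevant compact set as in observation (2). Thus on $V$ the section $\phi$ has only finitely many nonzero coordinates, each of the form $(\psi)_{\mid G^{y}}$ with $\psi\in C_{c}(G)$ — which, after relabelling, is the asserted local form — and $\phi$ is a locally finite sum of continuous $L^{2}(G)$-sections, hence continuous by the convergent-sequence description of the topology of $L^{2}(G)^{\infty}$.

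The step I expect to be the real obstacle is the construction of the $\zeta_{i}$: one must choose $V_{i}$ and $\chi_{i}$ so that $D_{i}>0$ throughout the $G$-saturation of $supp(f_{i})$ while $supp(\zeta_{i})$ stays relatively compact, and then verify both that the indeterminate ratio $\sqrt{f_{i}}/\sqrt{D_{i}}$ extends continuously across $\partial\{f_{i}>0\}$ and that it is $G$-invariant. This invariance is precisely what forces $\norm{\phi_{i}(y)}_{2}^{2}=f_{i}(y)$ and thereby makes the fibrewise norms add up to $1$. Properness of $G$ is used only through the local compactness and Hausdorffness of the orbit space and through the uniform-on-compacta control of the fibre supports $(r,s)^{-1}(\ov{V}\x supp(\zeta))$.
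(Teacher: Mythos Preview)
Your argument is correct and follows the same overall strategy as the paper's proof: build a countable family of invariant $L^{2}(G)$-sections, weight them by the square roots of an invariant partition of unity so that the fibrewise squared norms add to $1$, and observe that locally only finitely many coordinates are nonzero and each is the restriction of a $C_{c}(G)$-function.

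Where you differ is in the two building blocks. The paper produces its invariant sections by groupoid averaging, setting $\eta_{y_{0}}(g)=\int_{G^{r(g)}}a_{y_{0}}(h^{-1}g)\,d\la^{r(g)}(h)$ for $a_{y_{0}}\in C_{c}(G)$ and then identifying $\eta_{y_{0}}$ locally with a convolution $F\ast a_{y_{0}}\in C_{c}(G)$; you instead observe directly that $(\zeta\circ s)_{\mid G^{y}}$ is invariant for any $\zeta\in C_{c}(Y)$ and obtain the local $C_{c}(G)$-form by multiplying $\zeta\circ s$ by a cutoff. (These two classes of sections in fact coincide, since the paper's invariance forces $\eta_{y_{0}}(g)=\eta_{y_{0}}(s(g))$, but your route avoids the convolution computation.) For the invariant partition of unity the paper invokes a $G$-partition of unity $\{f_{\ga}\}$ with $\sum_{\ga}\int_{G^{y}}f_{\ga}(s(g))\,d\la^{y}(g)=1$, citing \cite{Patanindex,Patequivar}; you instead pull back an ordinary partition of unity from the orbit space $Y/G$, which is locally compact Hausdorff by properness, and then absorb the normalization $\sqrt{D_{i}}$ into $\zeta_{i}$ itself. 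Your version is thus a little more self-contained and uses the full-support axiom for $\la^{y}$ explicitly, whereas the paper's use of it is hidden inside the cited $G$-partition-of-unity construction. One small cosmetic point: in your ``after relabelling'' remark for the local form, no genuine relabelling is needed---take $n$ to be the largest surviving index and let the intermediate $\psi_{j}$ be the zero function in $C_{c}(G)$.
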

\begin{proof}
For $y_{0}\in Y$, let $a_{y_{0}}\in C_{c}(G)$ be such that $a_{y_{0}}\geq 0, a_{y_{0}}(y_{0})>0$.  Let $\eta_{y_{0}}:G\to \R^{+}$ be given by:
\[  \eta_{y_{0}}(g)=\int _{G^{r(g)}}a_{y_{0}}(h^{-1}g)\,d\la^{r(g)}(h).                      \]
We want to regard $k=\eta_{y_{0}}$ as a continuous, invariant section $y\to k_{y}$ of $L^{2}(G)$.  To prove this, the invariance of $k$ 
(i.e. that $g_{0}k_{s(g_{0})}=k_{r(g_{0})}$, or equivalently, that $k(g_{0}^{-1}g)=k(g)$ for all $g_{0},g\in G, r(g_{0})=r(g)$) follows from an axiom for left Haar systems.  For the continuity of the section $y\to k_{y}$ of $L^{2}(G)$, we will show that for any compact subset $A$ of $Y$, 
$k_{\mid r^{-1}A}\in C_{c}(r^{-1}A)$.  The continuity of $k$ as a section of $L^{2}(G)$ then follows, since for every relatively compact open subset $U$ of $Y$, there will then exist an $F\in C_{c}(G)$ such that $F=k$ on $r^{-1}U$ (so that $F_{y}=k_{y}$ for all $y\in U$).  
Since $F$ is continuous as a section $y\to F_{y}$ of $L^{2}(G)$, so also is $k$.  (Of course, $y\to k_{y}$ need not vanish at infinity.)

To show that $k_{\mid r^{-1}A}\in C_{c}(r^{-1}A)$, let $C$ be the (compact) support of $a_{y_{0}}$ and let $g\in r^{-1}A$.  If $a_{y_{0}}(h^{-1}g)>0$, then 
$r(h)=r(g)\in A$, and $s(h)\in r(C)$.  By the properness of $G$, $h$ belongs to the compact set $D=\{g'\in G: (r(g'),s(g'))\in A\x r(C)\}$.   Let $F\in C_{c}(G)$ be such that $F=1$ on $D$.  Then on $r^{-1}(A)$, $k$ coincides with the convolution $F*a_{y_{0}}$ of two $C_{c}(G)$-functions, and so is the restriction of a $C_{c}(G)$-function as required.  

By the continuity and positivity assumptions on $a_{y_{0}}$, the function $\eta_{y_{0}}(y_{0})> 0$.  So $(\eta_{y_{0}})_{y_{0}}\ne 0$.
By the continuity of $y\to \norm{(\eta_{y_{0}})_{y}}_{2}$, the set $U_{y_{0}}=\{y\in Y: (\eta_{y_{0}})_{y}\ne 0\}$ is an open neighborhood of $y_{0}$ in $Y$.  Since
$\eta_{y_{0}}$ is invariant, it follows that $U_{y_{0}}$ is an invariant subset of $Y$, i.e. is such that for $g\in G$, $s(g)\in U_{y_{0}}$ if and only if 
$r(g)\in U_{y_{0}}$.   Further, the $U_{y_{0}}$'s cover $Y$.  Since the action of $G$ on $Y$ is proper, there is a $G$-partition of unity $\{f_{\ga}: \ga\in S\}$, where $S$ can be taken to be infinitely countable (and so identified with $\{1,2,3,\ldots\}$), subordinate to the $U_{y}$'s (\cite{Patanindex}, \cite[Proposition 4]{Patequivar}).  This means that for each $\ga$,  $f_{\ga}\in C_{c}(Y)$, $0\leq f_{\ga}$, there exists a $y(\ga)\in Y$ such that $supp(f_{\ga})\subset U_{y(\ga)}$, and with $m_{\ga}:Y\to \R$ given by
$m_{\ga}(y)=\int_{G^{y}}f_{\ga}(s(g))\,d\la^{y}(g)$, we have
\begin{equation}   \label{eq:pou}
\sum_{\ga} m_{\ga}(y)=1,
\end{equation} 
the sum being locally finite.  

Using the properness of $G$ and the continuity of the maps $y\to \int_{G^{y}} F(g)\,d\la^{y}(g)$ for $F\in C_{c}(G)$, $m_{\ga}$ is invariant 
(i.e. $m_{\ga}(s(g))=m_{\ga}(r(g))$ for all $g\in G$) and continuous.  Define a section $\phi=\{\phi_{\ga}\}$ of $L^{2}(G)^{\infty}$ by setting 
\[  \phi_{\ga}(y)=m_{\ga}(y)^{1/2}(\norm{(\eta_{y(\ga)})_{\mid G^{y}}}_{2})^{-1}(\eta_{y(\ga)})_{\mid G^{y}}.   \]
We take $\phi_{\ga}(y)$ to be $0$ whenever $(\eta_{y(\ga)})_{\mid G^{y}}=0$.  For continuity reasons, we need to know that if 
$(\eta_{y(\ga)})_{\mid G^{y}}=0$ then $m_{\ga}(y)=0$.  To prove this, suppose then that $(\eta_{y(\ga)})_{\mid G^{y}}=0$.  Then 
$y\in Y\setminus U_{y(\ga)}$, which is invariant since $U_{y(\ga)}$ is.  So if $g\in G^{y}$, then $s(g)\in Y\setminus U_{y(\ga)}$, and in that case, 
$f_{\ga}(s(g))=0$ (since the support of $f_{\ga}$ lies inside $U_{y(\ga)}$), so that $m_{\ga}(y)=0$ from the definition of $m_{\ga}$.  

We now claim that $\phi=\{\phi_{\ga}\}$ is continuous and $G$-invariant. For the continuity of $\phi$, we note
that $\norm{\phi_{\ga}(y)}_{2}^{2}=m_{\ga}(y)$, and use the preceding paragraph, the local finiteness of the sum in (\ref{eq:pou}), and the 
continuity of the maps $y\to (\eta_{y(\ga)})_{\mid G^{y}}$, $y\to \norm{(\eta_{y(\ga)})_{\mid G^{y}}}_{2}$ to obtain that 
locally $\phi$ takes values in some $L^{2}(G)^{n}$ with $n$ finite and components the restrictions of $C_{c}(G)$-functions.  Since the $\eta_{y(\ga)}$, $m_{\ga}$ are $G$-invariant so also is $\phi$.  

Last, from (\ref{eq:pou}), $\norm{\phi(y)}_{2}=[\sum_{\ga\in S}\norm{\phi_{\ga}(y)}_{2}^{2}]^{1/2}=1$. 
\end{proof}

\begin{proposition}   \label{prop:ppl}
\begin{equation}  \label{eq:PPP}
P\oplus (P\otimes_{C_{0}(Y)} P_{G}^{\infty})\cong P\otimes_{C_{0}(Y)} P_{G}^{\infty}.   
\end{equation}
\end{proposition}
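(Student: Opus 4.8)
The plan is to promote the $G$-invariant unit section $\phi$ of Proposition~\ref{prop:myphi} to an equivariant isometric copy of $C_{0}(Y)$ inside $P_{G}^{\infty}$, then to run an Eilenberg-swindle-type argument based on (\ref{eq:pinftyinfty}) to absorb that copy, and finally to reduce the assertion to the tensor-product identities (\ref{eq:YPP}) and (\ref{eq:PQR}) already established.

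First I would convert $\phi$ into a $G$-equivariant isometric embedding $j:C_{0}(Y)\to P_{G}^{\infty}$. Recall $P_{G}^{\infty}=C_{0}(Y,L^{2}(G)^{\infty})$ and that $C_{0}(Y)$, as a $G$-Hilbert module, corresponds to the bundle $Y\x\C$ with $G$-action $(g,(s(g),a))\mapsto (r(g),a)$. Define a bundle morphism $\Phi:Y\x\C\to L^{2}(G)^{\infty}$ by $\Phi(y,a)=a\,\phi(y)$. It is continuous since $y\mapsto \phi(y)$ is; fibrewise it is a linear isometry onto $\C\,\phi(y)$, as $\norm{\phi(y)}_{2}=1$; and it is $G$-equivariant precisely because $\phi$ is invariant, i.e. $g\,\phi(s(g))=\phi(r(g))$. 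Its adjoint $\Phi^{*}(\xi_{y})=(y,\lan \phi(y),\xi_{y}\ran)$ is continuous by continuity of the fibrewise inner product, so by Proposition~\ref{prop:Etop} and the correspondence between Hilbert bundle morphisms and adjointable module maps, $j=\tilde{\Phi}$ is a $G$-equivariant adjointable map with $j^{*}j=\mathrm{id}_{C_{0}(Y)}$. Hence $q=jj^{*}$ is a $G$-equivariant projection on $P_{G}^{\infty}$; its range is equivariantly unitarily isomorphic to $C_{0}(Y)$ via $j$, and $Q:=(1-q)P_{G}^{\infty}$ is a $G$-Hilbert module, since the equivariance relation $q_{r(g)}g=g\,q_{s(g)}$ shows the subbundle $\{\ker q_{y}\}_{y}$ is $G$-invariant. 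Thus
\[   P_{G}^{\infty}\cong C_{0}(Y)\oplus Q   \]
as $G$-Hilbert modules.

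Next I would prove the absorption statement $C_{0}(Y)\oplus P_{G}^{\infty}\cong P_{G}^{\infty}$. By (\ref{eq:pinftyinfty}) and the last display, $P_{G}^{\infty}\cong (P_{G}^{\infty})^{\infty}\cong (C_{0}(Y)\oplus Q)^{\infty}$, and reindexing a sequence of pairs as a pair of sequences (a $G$-equivariant unitary, the action being diagonal) gives $(C_{0}(Y)\oplus Q)^{\infty}\cong C_{0}(Y)^{\infty}\oplus Q^{\infty}$; since plainly $C_{0}(Y)\oplus C_{0}(Y)^{\infty}\cong C_{0}(Y)^{\infty}$ (a shift), we get
\[   C_{0}(Y)\oplus P_{G}^{\infty}\cong C_{0}(Y)\oplus C_{0}(Y)^{\infty}\oplus Q^{\infty}\cong C_{0}(Y)^{\infty}\oplus Q^{\infty}\cong P_{G}^{\infty}.   \]
Finally I would tensor with $P$: an equivariant unitary $U:A\to B$ of $G$-Hilbert modules induces, via $U\otimes \mathrm{id}$, an equivariant unitary $A\otimes_{C_{0}(Y)}P\to B\otimes_{C_{0}(Y)}P$ (well defined and inner-product preserving on algebraic tensors, then extended by density, exactly as in Proposition~\ref{prop:Gcong}). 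Combining this with (\ref{eq:YPP}), the evident symmetry of $\otimes_{C_{0}(Y)}$ (valid since $C_{0}(Y)$ is commutative), and (\ref{eq:PQR}),
\[   P\oplus (P\otimes_{C_{0}(Y)}P_{G}^{\infty})\cong (C_{0}(Y)\oplus P_{G}^{\infty})\otimes_{C_{0}(Y)}P\cong P_{G}^{\infty}\otimes_{C_{0}(Y)}P\cong P\otimes_{C_{0}(Y)}P_{G}^{\infty},   \]
which is (\ref{eq:PPP}).

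The one genuinely delicate point I anticipate is the first step: checking that $\phi$, which need not vanish at infinity and so is not itself an element of $P_{G}^{\infty}$, nevertheless defines an honest bundle morphism with continuous adjoint (equivalently, that $j$ is adjointable with the stated adjoint), and that the orthogonal complement of the range of $j$ is again a $G$-Hilbert module. Once the equivariant splitting $P_{G}^{\infty}\cong C_{0}(Y)\oplus Q$ is secured, the remaining steps are routine bookkeeping with the direct-sum and tensor-product identities already in hand.
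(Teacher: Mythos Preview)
Your argument is correct, and it reaches the result by a genuinely different path from the paper's own proof. Both proofs start from the same ingredient: the invariant unit section $\phi$ of Proposition~\ref{prop:myphi} yields a $G$-equivariant adjointable isometry. You use it to embed $C_{0}(Y)$ into $P_{G}^{\infty}$, obtain an equivariant splitting $P_{G}^{\infty}\cong C_{0}(Y)\oplus Q$, and then run a swindle through $(P_{G}^{\infty})^{\infty}\cong C_{0}(Y)^{\infty}\oplus Q^{\infty}$ to deduce the intermediate absorption $C_{0}(Y)\oplus P_{G}^{\infty}\cong P_{G}^{\infty}$; the statement for $P$ follows by tensoring with $P$ over $C_{0}(Y)$ and invoking (\ref{eq:YPP}) and (\ref{eq:PQR}). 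The paper instead tensors $\phi$ with $P$ immediately, producing an equivariant isometry $V:P\to P\otimes_{C_{0}(Y)}P_{G}^{\infty}$, $V(p)=p\otimes\phi$, and then writes down the explicit Mingo--Phillips shift unitary
\[
U(p_{0},\xi_{1},\xi_{2},\ldots)=(Vp_{0}+(1-VV^{*})\xi_{1},\,VV^{*}\xi_{1}+(1-VV^{*})\xi_{2},\,\ldots)
\]
from $P\oplus(P\otimes_{C_{0}(Y)}P_{G}^{\infty})$ onto $(P\otimes_{C_{0}(Y)}P_{G}^{\infty})^{\infty}\cong P\otimes_{C_{0}(Y)}P_{G}^{\infty}$. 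Your route isolates the clean scalar statement $C_{0}(Y)\oplus P_{G}^{\infty}\cong P_{G}^{\infty}$ and needs only the trivial fact that tensoring an \emph{equivariant} unitary with $\mathrm{id}_{P}$ stays equivariant (rather than the subtler Proposition~\ref{prop:Gcong}); the paper's route avoids having to check that the complement $Q$ is a $G$-Hilbert module and that $(A\oplus B)^{\infty}\cong A^{\infty}\oplus B^{\infty}$ equivariantly, at the price of verifying directly that the displayed $U$ is unitary. The delicate point you flag---that $\phi\notin P_{G}^{\infty}$ but still defines an adjointable $j$---is handled exactly by the local form of $\phi$ given in Proposition~\ref{prop:myphi}, just as the paper uses it to check that $V$ is adjointable.
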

\begin{proof} 
Let $\phi$ be the continuous, invariant section of $L^{2}(G)^{\infty}$ given by Proposition~\ref{prop:myphi}.  For each $p\in P$, define a section $W\hat{p}$ of
$E\otimes L^{2}(G)^{\infty}$ by: $W\hat{p}=\hat{p}\otimes \phi$.  We claim that $W\hat{p}\in C_{0}(Y,E\otimes L^{2}(G)^{\infty})$.  To prove that $\norm{W\hat{p}(y)}\to 0$ as $y\to \infty$, by Proposition~\ref{prop:myphi}, $\norm{\hat{p}\otimes \phi(y)}_{2}=\norm{\hat{p}(y)}_{2}\norm{\phi(y)}_{2}=\norm{\hat{p}(y)}_{2}\to 0$ as $y\to \infty$.  The continuity of $W\hat{p}$ follows from the fact that locally, it is the restriction of an element of $(P\otimes_{alg,C_{0}(Y)} C_{c}(G)^{n})^{\widehat{}}$ (which is a subspace of the space of continuous sections of $E \otimes L^{2}(G)^{\infty}$).  It is easy to check that $W:C_{0}(Y,E)\to C_{0}(Y,E\otimes L^{2}(G)^{\infty})$ is a linear, $C_{0}(Y)$-module map, and that $\lan W\hat{p}, W\hat{p'}\ran = \lan\hat{p}, \hat{p'}\ran$.  Further, using the invariance of $\phi$,
\[ W_{r(g)}(g\hat{p}_{s(g)})=g\hat{p}_{s(g)}\otimes \phi_{r(g)}=g\hat{p}_{s(g)}\otimes g\phi_{s(g)}=gW(\hat{p})_{s(g)}
=gW_{s(g)}(\hat{p}_{s(g)})              \]
so that $W$ is $G$-invariant.  By Proposition~\ref{prop:Etop}, there exists a map 
$V:P\to P\otimes_{C_{0}(Y)} P_{G}^{\infty}$ such that $\widehat{Vp}=W\hat{p}$.  Note that $V_{y}p_{y}=(W\hat{p})_{y}$.

From the corresponding properties for $W$, $\lan V(p), V(p')\ran=\lan p,p'\ran$ and $V$ is a $G$-equivariant $C_{0}(Y)$-module map.
We claim that $V$ is adjointable with adjoint $V^{*}$ determined by: $V^{*}(p\otimes \psi)=p\lan \phi, \hat{\psi}\ran$ for
$\psi\in \cup_{n=1}^{\infty}C_{c}(G)^{n}$, a dense subspace of $P_{G}^{\infty}$.  Note that by definition, 
$\lan \phi, \hat{\psi}\ran(y)=\lan \phi_{y},\psi_{y}\ran$ (the inner product evaluated in $L^{2}(G^{y})^{\infty}$),
and using Proposition~\ref{prop:myphi}, 
$\lan \phi, \hat{\psi}\ran\in C_{0}(Y)$ and $\mid \lan \phi, \hat{\psi}\ran(y)\mid \leq \norm{\psi_{y}}$.  Now
\[  \lan p\otimes\psi, Vp'\ran=\lan \hat{p}\otimes \hat{\psi},\widehat{Vp'}\ran = \lan \hat{p},\hat{p'}\ran\lan \hat{\psi},\phi\ran
=\lan p\lan\phi,\hat{\psi}\ran ,p'\ran .  \]
It is easy to check that the bilinear map 
$p\otimes \psi\to p\lan \phi, \hat{\psi}\ran$ extends to a linear map $V^{*}$ from $P\otimes_{alg,C_{0}(Y)} P_{G}^{\infty}\to P$, and so
$\lan t,Vp'\ran = \lan V^{*}t,p'\ran$ for all $t\in P\otimes_{alg,C_{0}(Y)} P_{G}^{\infty}, p'\in P$.  Since
$\norm{\lan V^{*}(\sum_{i=1}^{n}p_{i}\otimes \psi_{i}),p'\ran}=\norm{\lan \sum_{i=1}^{n}p_{i}\otimes \psi_{i},Vp'\ran}
\leq \norm{\sum_{i=1}^{n}p_{i}\otimes \psi_{i}}\norm{p'}$, $V^{*}$ is continuous on $P\otimes_{alg,C_{0}(Y)} P_{G}^{\infty}$ and so
extends by continuity to $P\otimes_{C_{0}(Y)} P_{G}^{\infty}$.  This extension is the adjoint of $V$ as claimed.

Using the approach of Mingo and Phillips (\cite{MingoPhillips}), define \\ 
$U:P\oplus (P\otimes_{C_{0}(Y)} P_{G}^{\infty})\to (P\otimes_{C_{0}(Y)} P_{G}^{\infty})$ by:
\[  U(p_{0},\xi_{1},\xi_{2},\ldots )=(Vp_{0}+(1-VV^{*})\xi_{1},VV^{*}\xi_{1}+(1-VV^{*})\xi_{2},\ldots  ). \]
One checks that for each $w=(p_{0},\xi)$, $U(w)=(b_{1}, b_{2}, \ldots )$ belongs to \\
$(P\otimes_{C_{0}(Y)} P_{G}^{\infty})^{\infty}$, i.e.
that $\sum_{i=1}^{\infty} \lan b_{i},b_{i}\ran$ converges in $C_{0}(Y)$.  
By (\ref{eq:pinftyinfty}) and (\ref{eq:pqinfty}), $(P\otimes_{C_{0}(Y)} P_{G}^{\infty})^{\infty}= P\otimes_{C_{0}(Y)} P_{G}^{\infty}$. 
Further, $U$ preserves the $C_{0}(Y)$-valued inner product.  Direct calculation shows that $U$ has an adjoint given by:
\[  U^{*}(\eta_{1},\eta_{2},\ldots )=(V^{*}\eta_{1},VV^{*}\eta_{2}+(1-VV^{*})\eta_{1},VV^{*}\eta_{3}+(1-VV^{*})\eta_{2}, \ldots ),   \]
that $U$ is unitary and, using the invariance of $V$, that $U$ preserves the groupoid action. 
\end{proof}

\begin{theorem}   \label{th:stab}
(Groupoid stabilization theorem)\hspace{.2in}  If $P$ is a $G$-Hilbert module, then 
\[   P\oplus P_{G}^{\infty}\cong P_{G}^{\infty}.           \]
\end{theorem}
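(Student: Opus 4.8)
The theorem, at this point, is a formal consequence of the results already assembled; the plan is to bolt the equivariant ``mixing'' of Proposition~\ref{prop:ppl} onto the ordinary (non-equivariant) Kasparov stabilization theorem, the bridge between the two being Proposition~\ref{prop:Gcong}. The first step I would take is to isolate the auxiliary $G$-Hilbert module isomorphism
\[   (P\otimes_{C_{0}(Y)} P_{G}^{\infty})\oplus P_{G}^{\infty}\cong P_{G}^{\infty}   \qquad (\ast)   \]
and then to deduce $P\oplus P_{G}^{\infty}\cong P_{G}^{\infty}$ from $(\ast)$ together with Proposition~\ref{prop:ppl}.

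To prove $(\ast)$: since $P$ is second countable it is countably generated as a Hilbert $C_{0}(Y)$-module (noted earlier), so the ordinary Kasparov stabilization theorem supplies a Hilbert $C_{0}(Y)$-module isomorphism $P\oplus C_{0}(Y)^{\infty}\cong C_{0}(Y)^{\infty}$ --- not, of course, equivariant. Both sides are $G$-Hilbert modules, since $C_{0}(Y)$ is and finite and countable direct sums of $G$-Hilbert modules are again $G$-Hilbert modules, so Proposition~\ref{prop:Gcong} upgrades this isomorphism to a $G$-equivariant one after tensoring over $C_{0}(Y)$ with $P_{G}$:
\[  (P\oplus C_{0}(Y)^{\infty})\otimes_{C_{0}(Y)} P_{G}\cong C_{0}(Y)^{\infty}\otimes_{C_{0}(Y)} P_{G}.  \]
Expanding the left side by (\ref{eq:PQR}), and using (\ref{eq:YPP}) and (\ref{eq:pqinfty}) to identify $C_{0}(Y)^{\infty}\otimes_{C_{0}(Y)} P_{G}\cong(C_{0}(Y)\otimes_{C_{0}(Y)} P_{G})^{\infty}\cong P_{G}^{\infty}$, this becomes $(P\otimes_{C_{0}(Y)} P_{G})\oplus P_{G}^{\infty}\cong P_{G}^{\infty}$ as $G$-Hilbert modules. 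Applying $(\,\cdot\,)^{\infty}$ to both sides, then rearranging the resulting countable direct sums by the same type of argument as in the proof of (\ref{eq:pinftyinfty}) --- so that $(X\oplus Z)^{\infty}\cong X^{\infty}\oplus Z^{\infty}$ --- and finally invoking (\ref{eq:pqinfty}) and (\ref{eq:pinftyinfty}) to rewrite $(P\otimes_{C_{0}(Y)} P_{G})^{\infty}\cong P\otimes_{C_{0}(Y)} P_{G}^{\infty}$ and $(P_{G}^{\infty})^{\infty}\cong P_{G}^{\infty}$, one arrives at $(\ast)$.

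Granting $(\ast)$, the theorem drops out by associativity bookkeeping. By $(\ast)$ and associativity of $\oplus$,
\[  P\oplus P_{G}^{\infty}\cong P\oplus\big((P\otimes_{C_{0}(Y)} P_{G}^{\infty})\oplus P_{G}^{\infty}\big)\cong\big(P\oplus(P\otimes_{C_{0}(Y)} P_{G}^{\infty})\big)\oplus P_{G}^{\infty},  \]
and by Proposition~\ref{prop:ppl} the inner summand $P\oplus(P\otimes_{C_{0}(Y)} P_{G}^{\infty})$ is isomorphic as a $G$-Hilbert module to $P\otimes_{C_{0}(Y)} P_{G}^{\infty}$, so the above is $\cong(P\otimes_{C_{0}(Y)} P_{G}^{\infty})\oplus P_{G}^{\infty}\cong P_{G}^{\infty}$, the final step being $(\ast)$ once more. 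All the isomorphisms in play are $G$-equivariant unitaries, since each of Propositions~\ref{prop:ppl} and \ref{prop:Gcong} and the identities (\ref{eq:pinftyinfty}), (\ref{eq:pqinfty}), (\ref{eq:PQR}), (\ref{eq:YPP}) is.

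Since the genuine analytic content has already been spent in Propositions~\ref{prop:myphi}, \ref{prop:ppl} and \ref{prop:Gcong}, I expect no real obstacle at this last stage. The only points that need attention are: that the ordinary Kasparov theorem may legitimately be invoked (which holds because $P$ is countably generated); that the rearrangements of countable direct sums and the tensor identities used above are compatible with the $G$-action (all established in the previous section); and simply keeping the associativity bookkeeping straight in the concluding chain of isomorphisms.
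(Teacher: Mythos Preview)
Your proof is correct and follows essentially the same route as the paper: you isolate the auxiliary equivariant isomorphism $(P\otimes_{C_{0}(Y)} P_{G}^{\infty})\oplus P_{G}^{\infty}\cong P_{G}^{\infty}$ (the paper's (\ref{eq:GPGG})) via non-equivariant Kasparov stabilization plus Proposition~\ref{prop:Gcong} and the tensor identities, and then finish with Proposition~\ref{prop:ppl} and associativity exactly as the paper does. The only cosmetic difference is that you tensor with $P_{G}$ and then pass to $(\,\cdot\,)^{\infty}$, whereas the paper carries $P_{G}^{\infty}$ through the chain directly; the content is the same.
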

\begin{proof} 
We claim first that 
\begin{equation}   \label{eq:GPGG}
P_{G}^{\infty}\cong (P\otimes_{C_{0}(Y)} P_{G}^{\infty})\oplus P_{G}^{\infty}.
\end{equation}  

For using (\ref{eq:YPP}), (\ref{eq:pinftyinfty}), (\ref{eq:pqinfty}),  
the non-equivariant stabilization theorem, Proposition~\ref{prop:Gcong} and (\ref{eq:PQR}),
\[ P_{G}^{\infty}\cong (C_{0}(Y)\otimes_{C_{0}(Y)} P_{G})^{\infty}\cong C_{0}(Y)^{\infty}\otimes_{C_{0}(Y)} P_{G}^{\infty}
\cong (P\oplus C_{0}(Y)^{\infty})\otimes_{C_{0}(Y)} P_{G}^{\infty} \]
\[ \cong (P\otimes_{C_{0}(Y)} P_{G}^{\infty})\oplus (C_{0}(Y)^{\infty}\otimes_{C_{0}(Y)} P_{G}^{\infty})\cong 
(P\otimes_{C_{0}(Y)} P_{G}^{\infty})\oplus P_{G}^{\infty}.\]
Using (\ref{eq:GPGG}) and (\ref{eq:PPP}),
\[ P\oplus P_{G}^{\infty}\cong P\oplus ((P\otimes_{C_{0}(Y)} P_{G}^{\infty})\oplus P_{G}^{\infty})=  
[P\oplus (P\otimes_{C_{0}(Y)} P_{G}^{\infty})]\oplus P_{G}^{\infty}  \]
\[  \cong (P\otimes_{C_{0}(Y)} P_{G}^{\infty})\oplus P_{G}^{\infty}
\cong P_{G}^{\infty}.    \]
\end{proof}

\end{document}